\documentclass[12pt]{article}
\usepackage{epsfig,color}

\usepackage{times}
\usepackage{amsmath,amsfonts,amstext,amssymb,amsbsy,amsopn,amsthm,eucal}
\usepackage{txfonts}
\usepackage{dsfont}
\usepackage{graphicx}   

\markright{5555}
\numberwithin{equation}{section}
\setcounter{secnumdepth}{1}
\setcounter{tocdepth}{1}
  \theoremstyle{plain}
 \newtheorem{theorem}[equation]{Theorem}

 \newtheorem{lemma}[equation]{Lemma}
 \newtheorem{corollary}[equation]{Corollary}

 \theoremstyle{remark}

 \newtheorem{remark}[equation]{Remark}

\theoremstyle{definition}
 \newtheorem{definition}[equation]{Definition}




\topmargin  = 0.2mm       
\evensidemargin =0.2mm

\setlength{\textheight}{8.75in} \setlength{\textwidth}{6.5in}
\setlength{\columnsep}{0.5in} \setlength{\topmargin}{0in}
\setlength{\headheight}{0in} \setlength{\headsep}{0in}
\setlength{\parindent}{1pc}
\setlength{\oddsidemargin}{0in}  
\setlength{\evensidemargin}{0in}

\newcommand{\rexp}{{\rm exp}}
\newcommand{\Vol}{{\rm Vol}}

\newcommand{\inj}{{\rm inj}}

\newcommand{\dR}{\mathds{R}}

\newcommand{\dZ}{\mathds{Z}}

\newcommand{\dist}{{\rm dist}}

\newcommand{\cB}{\mathcal{B}}
\newcommand{\cC}{\mathcal{C}}

\newcommand{\cN}{\mathcal{N}}

\newcommand{\cS}{\mathcal{S}}

\newcommand{\cW}{\mathcal{W}}

\begin{document}

\title{Quantitative Stratification and the\\Regularity of Harmonic Maps and Minimal Currents}

\author{Jeff Cheeger\thanks{The author was partially supported by NSF grant DMS1005552 } and Aaron Naber\thanks{The author was partially supported by NSF postdoctoral grant 0903137}}
\date{\today}
\maketitle

\begin{abstract}
We introduce techniques for turning estimates on the
infinitesimal behavior of solutions to nonlinear equations
(statements concerning tangent cones and blow ups)  into more effective control.
In the present paper, we focus on proving regularity theorems for stationary and minimizing harmonic maps and minimal currents. There are several aspects to our improvements of known estimates.
First, we replace known estimates on the Hausdorff dimension of singular sets
by estimates on their Minkowski $r$-content, or equivalently, on the volumes of
their $r$-tubular neighborhoods.
  Second, we give improved regularity  control
with respect to the number of derivatives bounded and/or on the norm in which the derivatives
are bounded. As an example of the former,
  our results for  minimizing harmonic maps $f:M^n\rightarrow N^m$ between riemannian manifolds  include {\it a priori} bounds in $W^{1,p}\cap W^{2,\frac{p}{2}}$ for all $p<3$. These are
the first such bounds involving second derivatives in general dimensions. Finally,
the quantity  we control is actually provides much stronger information
than follows from a bound on the $L_p$ norm of derivatives.
Namely, we obtain $L_p$ bounds for the inverse of the {\it regularity scale} $r_f(x):= \max\left\{r: \sup_{B_r(x)}r|\nabla f|+r^2|\nabla^2 f|\leq 1\right\}$.
Applications to minimal hypersufaces include {\it a priori} $L_p$ bounds for the second fundamental form $A$ for all $p<7$.  Previously known bounds   were for $p\leq 2+\epsilon(n)$.
Again, the full theorem is much stronger and yields $L_p$ bounds for the corresponding regularity scale $r_{|A|}(x):= \max\left\{r: \sup_{B_r(x)}r|A|\leq 1\right\}$.  In outline, our discussion follows that of an earlier paper in which we proved analogous estimates in the context of noncollapsed riemannian manifolds with a lower bound
on Ricci curvature. These were applied to Einstein manifolds.  A key point in
all of these arguments is to establish the relevant quantitative differentiation theorem.
\end{abstract}
\pagebreak

\small{\tableofcontents}

\section{Introduction}
In this paper, we  study harmonic maps between smooth riemannian manifolds, and by similar methods, minimal and stationary currents on riemannian manifolds.  We introduce techniques which, when combined with ineffective tangent cone estimates, yield new effective regularity control.

Throughout the paper, $0^n\in\dR^n$ denotes the origin in $\dR^n$ and $x\in M^n$ denotes a point of the riemannian manifold $(M^n,g)$. Without essential loss of generality,
the following assumptions  will  be in force throughout the remainder of
the paper.
\begin{align}\label{con:sectional}
 |\sec_{B_2(x)}|\leq 1\, ,
\end{align}
\begin{align}
\label{con:volume}
 \inj_{M^n}(x)\geq 1>0\, .
\end{align}


Our main theorems are the quantitative stratifications of Theorems \ref{t:harm_quant_strat}, \ref{t:current_quant_strat} and the new regularity results of Theorems \ref{t:harm_min_regularity}, \ref{t:current_min_regularity} and Corollaries \ref{c:harm_min_regularity}, \ref{c:current_min_regularity}.

As an example, according to Corollary \ref{c:harm_min_regularity},  a minimizing harmonic map
$f:(M,g)\to (N^m,h)$ has a proiri bounds in $W^{1,p}\cap W^{2,\frac{p}{2}}$ for all $p<3$.  These are the first estimates which give $L_p$ bounds on the gradient with $p>2$, as well as first results providing control on second derivatives in general dimensions; compare \cite{SchoenUhlenbeck_RegularityHarmonic}, \cite{Simon_RegMin}.  In fact, the actual statement, which is much stronger, gives $L_p$ control for the {\it regularity scale} of a harmonic map; see Definition \ref{d:harm_reg_scale}.
In essence, we bound not just the volume of points in $M^n$ where $f$ does not have definite
derivative bounds, but also, the volume of points at which $f$ does not have such
derivative bounds on definite sized neighborhoods of these points. We will see that these estimates are  sharp.

In Corollary \ref{c:current_min_regularity} we give corresponding estimates for minimizing hypersurfaces.  Namely, we show that a minimizing hypersurface has second fundamental form $A$ lying in $L_p$ for all $p<7$.
This generalizes results of \cite{SchoenSimonYau_curvatureminimal} where such a bound was shown to hold for stationary minimal submanifolds with $p\leq 2+\epsilon(n)$.  Again, the actual statement of Corollary \ref{c:current_min_regularity} is much stronger, and gives $L_p$ control for the regularity scale of a current; see Definition \ref{d:current_reg_scale}.  Though we focus on currents here, the theorems are equally valid for varifolds.

\begin{remark}
\label{illrem}
 In the isolated singularity case, $n=8$, this result was independently proved by Tom Illamen.  In fact, in this case he proved the stronger statement that away from a {\it definite} finite number of points $\{p_\alpha\}$ there is the bound $|A|(x)\leq C \max |x-p_\alpha|^{-1}$.  Our ability to move from the isolated singularity case to the higher dimensional situation is based on Decomposition Lemma and the Cone-splitting Lemma; see Sections \ref{s:harm_Reg2Stratreduction}, \ref{s:current_Reg2Stratreduction} for more details.
\end{remark}

The proofs of the effective stratifications of Theorems \ref{t:harm_quant_strat} and \ref{t:current_quant_strat} are based on a quantitative version of blow up arguments (also referred to as "dimension reduction").
This  is a tool which uses the infinitesimal behavior of stationary maps and currents i.e. tangent maps and tangent cones, to obtain Hausdorff dimension estimates on singular sets $\cS$;
see \cite{Federer_GeomMeas}, \cite{SchoenUhlenbeck_RegularityHarmonic}.  Theorems \ref{t:harm_quant_strat} and \ref{t:current_quant_strat} exploit an additional principal,
"quantitative differentiation" (in the sense of \cite{CKN}, \cite{cheegernaber_quantstrateistein}, \cite{cheegerqdg}), in order to derive more effective Minkowski dimension
estimates.  These are estimates not just on  the singular sets themselves, but also on the volumes of tubes
$T_r(\cS)$ around the singular sets.  In addition,
what we call
the Decomposition Lemma and the Cone-splitting Lemma
are used to analyze the behavior of maps and currents at given fixed scales, rather than passing to a limit and studying tangential behavior.  This eventually yields the quantitative dimension reduction needed for Theorems \ref{t:harm_quant_strat} and \ref{t:current_quant_strat}.

The proofs of the regularity results of Theorems \ref{t:harm_min_regularity} and \ref{t:current_min_regularity}  require  new $\epsilon$-regularity theorems. These are given in Section \ref{s:harm_monotonicity}
(for harmonic maps) and Section \ref{s:current_monotonicity} (for minimizing currents).
The proofs are not difficult. Contradiction arguments are used
to reduce the statements to previously known results.
On the other hand, the theorems have a somewhat different character
from the $\epsilon$-regularity theorems of  \cite{Federer_GeomMeas}, \cite{SchoenUhlenbeck_RegularityHarmonic}.
Roughly speaking, these theorems assert that if a neighborhood of a point has small energy in the right sense, then the point is a smooth point.  By contrast, the $\epsilon$-regularity theorems of this paper state that if a neighborhood of a point has enough {\it approximate} degrees of symmetry, then the point is a smooth point.  Such $\epsilon$-regularity theorems are found  in riemannian geometry and particularly in the study of Einstein manifolds; see for instance \cite{MR1937830}.  The notion of {\it approximate} symmetry turns out to be exactly what can be controlled by the quantitative dimension reduction of Theorems \ref{t:harm_quant_strat} and \ref{t:current_quant_strat}.  Hence, when the $\epsilon$-regularity and quantitative stratification theorems are combined, we get the regularity results of Theorems \ref{t:harm_min_regularity}, \ref{t:current_min_regularity} and Corollaries \ref{c:harm_min_regularity}, \ref{c:current_min_regularity}.

 In  general outline, we will follow a scheme introduced in our paper,
\cite{cheegernaber_quantstrateistein}, where analogous estimates were obtained in the context of riemannian manifolds
with definite lower bounds on  Ricci curvature and on the collapsing; see Theorems 1.10, 1.17, 1.25.  The most refined results in that paper are for Einstein manifolds.  They give estimates on the ``curvature
radius'' off sets of small volume.  The "regularity scale" considered in the present paper is the analog of the curvature radius.  As in the present paper, a quantatitve differentiation theorem
in the sense of  \cite{CKN} (see also\cite{cheegerqdg})
plays a key role. Other features  introduced in \cite{cheegernaber_quantstrateistein}
which are also crucial here include   the Decomposition Lemma
 and the Cone-splitting Lemma; see Sections \ref{s:harm_Reg2Stratreduction} and \ref{s:current_Reg2Stratreduction}.

The paper is divided into two parts:  Part I concerns harmonic maps;
Part II concerns minimal currents.  Since there is a strong parallel
between the two cases,  in Part II, in so far as is possible, we will indicate where and how the harmonic map discussion can be modified to obtain the corresponding results on minimal currents.

\begin{remark}
\label{oe}

Although we concentrate on harmonic maps and minimal currents, the same techniques can be applied to similar nonlinear equations.  This will be discussed elsewhere.
The most straightforward applications would be to minimizers of other energy functionals; see for instance \cite{SchoenSimon_regularityminimal}.  Effective estimates on the nodal sets of harmonic maps should also be possible, see \cite{HanHardtLin_Nodal}.  Applications to nonlinear parabolic equations such as mean curvature flows and the Ricci flow also seem plausible but would require additional technical results.
\end{remark}

\part{Harmonic Maps}

\section{Main results on harmonic maps}
\label{smrh}
In this section we state our main quantitative results on harmonic
maps.  Specifically, we will be concerned with two classes of harmonic maps:  stationary
maps and minimizing maps.  For stationary harmonic maps, we will define a
 certain {\it quantitative} stratification of the singular set.  Our first main theorem, Theorem
\ref{t:harm_quant_strat}, is an estimate on  Minkowski content for the quantitative (or equivalently,
effective) strata.

Recall that the {\it Minkowski $r$-content} of a set $A$ is the number of closed metric
balls of radius $r$ in a minimal
covering of $A$.  In particular, if for all $r$, this number is bounded by $C_\eta\cdot r^{-(d+\eta)}$ then $A$ is said
to have  {\it Minkowski dimension $d$}.  Clearly, the Minkowski dimension is $\geq$ the Hausdorff dimension
since in the latter, coverings by balls of radius $r$ are replaced by (the larger class of) coverings by balls of radius $\leq r$. Throughout the paper, our notation convention is:
\begin{equation}
\begin{aligned}
\dim &={\rm Hausdorff\,\, dimension}\, ,\\
\dim_{{\rm Min}} &={\rm Minkowski\,\, dimension}\, .
\end{aligned}
\end{equation}

In view of the assumed sectional curvature bound, (\ref{con:sectional}),
in our situation (or more generally,
given a lower Ricci crvature bound) a
bound on the Minkowski \hbox{$r$-content} of a set yields a bound on the volume of its
$r$-tubular neighborhood.
Hence, depending on the
precise statement, an estimate on  Minkowski content provides can
either an effective version of  a Hausdorff
dimension estimate or a Hausdorff measure estimate. While both types of strengthened
estimates played a role in \cite{cheegernaber_quantstrateistein}, in the present paper only
former is relevant.

 Our principle application of this new quantitative stratification  to minimizing harmonic maps
is given in Theorem \ref{t:harm_min_regularity}.  There, when combined with appropriate $\epsilon$-regularity theorems, the quantitative stratification leads to bounds on the "regularity scale"; see Definition \ref{d:harm_reg_scale}.

\subsection{The standard stratification}
Prior to discussing the quantitative stratification of Theorem \ref{t:harm_quant_strat},
 we  will recall the standard stratification of the singular set for harmonic maps.  This is based on the notion of a
"$k$-homogeneous map".

\begin{definition}
Measurable map $h:\dR^n\rightarrow N^m$ is
{\it $k$-homogeneous} at $y\in\dR^n$ with respect to the
$k$-plane $V^k\subseteq\dR^n$ if:
\begin{enumerate}
 \item $h(y+z)=h(y+\lambda z)$ for every $\lambda>0$ and $z\in\dR^n$.
 \item $h(z)=h(z+v)$ for every $z\in\dR^n$ and $v\in V^k$.
\end{enumerate}
If $y= 0^n$ then we  say that $h$ is $k$-homogeneous.
\end{definition}

\noindent
Note that an $n$-homogeneous map is simply a constant map.

 For $y\in M$ and $0<r<\inj_{M^n}(y)$, define the map
$T_{y,r}f:B_{r^{-1}}(0^n)\subseteq \dR^n\rightarrow N^m$ by
$$
T_{y,r}f(z):= f\circ {\rm exp}_y(r z)\, .
$$
We call $T_yf:\dR^n\rightarrow N^m$ a {\it tangent map} of $f$ at $y$ if there
 exists $r_i\rightarrow 0$ such that
$$
\fint_{B_1(0^n)} \dist(T_yf(z),T_{y,r_i}f)^2\rightarrow 0\, .
$$
  Tangent maps at a point need not be unique. However, if $f$ is stationary, then
tangent maps are always $0$-homogeneous i.e. for every
$\lambda>0$,
 $$
T_yf(\lambda z)=T_yf(z)\, .
$$

 Now we define the natural stratification,
$$
\cS^0(f)\subseteq \cS^1(f)\subseteq\cdots\subseteq \cS^{n-1}(f)=\cS(f)\subseteq M^n\, .
$$
By definition, $y\in \cS^k(f)$ if and only if no tangent map $T_yf$ at $y$ is $(k+1)$-homogeneous.
By Schoen-Uhlenbeck, \cite{SchoenUhlenbeck_RegularityHarmonic} (see also \cite{white_stratification})
\begin{equation}
\label{de}
\dim \cS^k(f) \leq k\, .
\end{equation}
Moreover, by  \cite{SchoenUhlenbeck_RegularityHarmonic}, if $f$ is a minimizing harmonic map,
\begin{equation}
\label{cd3}
\begin{aligned}
\cS(f)&=\cS^{n-3}(f)\, ,\\
\dim\, \cS(f)&\leq n-3\, .
\end{aligned}
\end{equation}

\subsection{The quantitative stratification}
In order to define the quantitative version of this stratification, we first define the concept of
an ``almost $k$-homogeneous map''.

\begin{definition}
A measurable map $f:B_{2r}(x)\subseteq M^n\rightarrow N^m$ is {\it $(\epsilon,r,k)$-homogeneous} if there exists a
$k$-homogeneous map $h:\dR^n\rightarrow N^n$ such that
 $$
\fint_{B_1(0^n)}\dist(T_{x,r}f,h)^2<\epsilon\, .
$$
\end{definition}

 In the above  case, if $h$ is $k$-homogeneous with respect to the $k$-plane $V^k\subseteq\dR^n$,
 then we call a $V^k$ a {\it defining $k$-plane for $f$} and write
$$
V^k_{f,x}:= B_{r}(x)\cap \rexp_x(V^k)\, .
$$

Next we  introduce the  quantitative singular set.
\begin{definition}
 For each $\eta>0$ and $0<r<1$  the {\it $k^{th}$ effective
singular stratum $\cS^k_{\eta,r}(f)\subseteq M^n$} is the set
\begin{align}
 \cS^k_{\eta,r}(f):=\left\{y\in M^n: \fint_{B_1(0^n)}\dist(T_{y,s}f,h)^2> \eta\text{ for all }
r\leq s\leq 1\text{ and $(k+1)$-homogeneous maps }h\right\}.
\end{align}
\end{definition}

\noindent
Note that $y\in \cS^k_{\eta,r}(f)$ if and only if $f$ is not $(\eta,s,k+1)$-homogeneous for
every $r\leq s\leq 1$.  Moreover, it follows immediately from the definition that
\begin{equation}\label{containments}
\cS^{k}_{\eta,r}(f)\subset \cS^{k'}_{\eta',r'}(f)\qquad
({\rm if}\,\,k'\leq k,\,\,\eta'\leq\eta,\,\, r\leq r')\, ,
\end{equation}
\begin{equation}
\label{relation}
\cS^k(f)=\bigcup_\eta \,\,\bigcap_r \, \cS^k_{\eta,r}(f)\, .
\end{equation}

Our main theorem concerning the behavior of the effective singular
set is  Theorem \ref{t:harm_quant_strat} below.  It states that the known  Hausdorff dimension
estimates on the singular set can be strengthened to
estimates on the Minkowski
content of the quantitative stratification.  This is equivalent to
the (formally stronger) statement that the $r$-tubular neighborhoods
of the quantitative strata, $T_r(\cS^k_{\eta,r}(f))$, have volume which is
controlled by any power of the radius $r$ that is less than the Hausdorff codimension.
\begin{theorem}
\label{t:harm_quant_strat}
Let $f:B_2(x)\subseteq M^n\rightarrow N^m$ denote a stationary harmonic
map with bounded Dirichlet energy
\begin{equation}
\label{deb}
\int_{B_2(x)}|\nabla f|^2<\Lambda\, .
\end{equation}
 Then for all $\eta>0$ there exists $C=C(n,N^m,\Lambda,\eta)$,
such that  for any $0<r< 1$,
\begin{align}
\Vol(T_r(\cS^k_{\eta,r}(f))\cap B_1(x))\leq Cr^{n-k-\eta}.
\end{align}
\end{theorem}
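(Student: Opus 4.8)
The plan is to prove the volume bound by a covering argument that combines two ingredients: a \emph{quantitative differentiation} estimate controlling, at each point, the number of scales at which $f$ fails to be almost $(k+1)$-homogeneous, and an \emph{inductive cone-splitting / decomposition} step that converts "many bad scales" into a codimension gain in the Minkowski content. Concretely, fix $\eta>0$ and work at dyadic scales $r_j = 2^{-j}$, $j=0,1,\dots,J$ with $2^{-J}\approx r$. For a point $y$, call scale $r_j$ \emph{good} if $f$ is $(\eta,r_j,k+1)$-homogeneous at $y$ and \emph{bad} otherwise; by definition, $y\in\cS^k_{\eta,r}(f)$ precisely when \emph{every} scale $r\le s\le 1$ is bad. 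The monotonicity of the normalized Dirichlet energy $\theta_f(y,s) = s^{2-n}\int_{B_s(y)}|\nabla f|^2$ for stationary maps, together with the bound \eqref{deb}, shows $\theta_f(y,\cdot)$ is monotone and bounded between $0$ and $C(n,\Lambda)$. A scale at which the energy drops by less than a fixed amount $\delta$ must, up to a fixed $\epsilon$ depending on $\delta$, be a scale at which $T_{y,s}f$ is $L^2$-close to some $0$-homogeneous map (a cone); this is the standard consequence of the almost-monotonicity/rigidity ("almost constant energy $\Rightarrow$ almost homogeneous"). Since the total energy variation is at most $C(n,\Lambda)$, for each point there are at most $N_0 = N_0(n,\Lambda,\delta)$ scales with energy drop $\ge\delta$; hence all but $N_0$ scales are "almost cone" scales.

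The second and main step upgrades "almost cone at a scale" to "almost $(k+1)$-homogeneous at a scale" by exploiting \emph{how} the cone points vary in space — this is the Cone-splitting Lemma referenced in the excerpt (Section \ref{s:harm_Reg2Stratreduction}): if $f$ is close to a cone with vertex at $y$ and also close to a cone with vertex at $y'$ with $y'-y$ not too short and not too aligned with previously found symmetry directions, then $f$ is close to a cone invariant along the line $yy'$, i.e.\ it has picked up an extra translational symmetry. Iterating this, a point $y$ that is in $\cS^k_{\eta,r}$ cannot have, at any scale $s\in[r,1]$, a full $(k+1)$-dimensional "spread" of nearby almost-vertices; equivalently, at each bad scale the set of nearby bad points is, up to the defining $k$-plane $V^k_{f,y}$, effectively $k$-dimensional. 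This is the Decomposition Lemma's role: it lets one cover $\cS^k_{\eta,r}(f)\cap B_1(x)$ at scale $r_{j+1}$ by controllably many balls of radius $r_{j+1}$ centered in the cover at scale $r_j$, the number multiplying by $C(n)\cdot (r_j/r_{j+1})^{k} = C(n)2^{k}$ at each "almost cone" step and by some fixed constant $A=A(n)$ at each of the at most $N_0$ "energy drop" steps. Running this down to scale $r$ gives a cover of $\cS^k_{\eta,r}(f)\cap B_1(x)$ by at most $A^{N_0}\cdot (C(n)2^{k})^{J} = C(n,\Lambda,\eta)\cdot r^{-k-\eta'}$ balls of radius $r$, where $\eta'\to0$ as the "almost cone" threshold $\delta\to0$; choosing $\delta$ appropriately makes $\eta'<\eta$. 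Converting the Minkowski $r$-content bound to the volume bound $\Vol(T_r(\cS^k_{\eta,r}(f))\cap B_1(x))\le Cr^{n-k-\eta}$ uses only the sectional (indeed lower Ricci) bound \eqref{con:sectional} and Bishop--Gromov, since each radius-$r$ ball has volume $\le C(n)r^n$ and its $r$-neighborhood is covered by boundedly many such balls.

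There is a bookkeeping subtlety I would handle carefully: the $\epsilon$ and $\delta$ thresholds, the homogeneity parameter $\eta$, and the number of exceptional scales $N_0$ must be chosen in the right order so that no circularity arises — one fixes $\eta$, then the target codimension defect $\eta$, then $\delta$ small enough (hence $\epsilon$, via the rigidity lemma) so the per-step branching $(r_j/r_{j+1})^k \cdot (\text{sub-polynomial})$ integrates to $r^{-k-\eta}$, and only then is $N_0 = N_0(n,\Lambda,\delta)$ determined. The argument should be run by a downward induction on $k$ (from $k=n-1$) or, more cleanly, as a direct induction on the number of scales $J$, establishing the cover-count bound $N(j)\le C\,2^{(k+\eta)j}$ by the two-case step above; the base case $j=0$ is trivial. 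I expect the genuine difficulty — and the place where the real work of Sections \ref{s:harm_Reg2Stratreduction} lies — to be the \textbf{Cone-splitting Lemma}: showing quantitatively that an almost-cone structure with two (or several) sufficiently separated, sufficiently non-degenerate vertices forces an extra approximate translational symmetry, with all error terms tracked uniformly in $n$, $N^m$, $\Lambda$. Everything else (monotonicity, the counting of energy-drop scales, the passage from content to tube volume) is standard given the stationarity hypothesis and the curvature bound, and I would cite the relevant $\epsilon$-regularity and monotonicity results rather than reprove them.
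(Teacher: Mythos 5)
Your high-level outline---monotonicity of the normalized Dirichlet energy controls the number of ``energy-drop'' scales per point, cone-splitting forces a $k$-dimensional confinement of the singular set at the remaining ``almost-cone'' scales, an inductive covering argument yields the Minkowski content bound, and the curvature bound converts content to tube volume---is the paper's strategy. But the ``direct induction on $J$'' with a two-case per-scale branching factor would fail, and this is precisely the issue the Decomposition Lemma (Lemma \ref{l:harm_decomposition}) and Remark \ref{significance} exist to address. The good/bad classification of a scale is point-dependent, not scale-dependent: for two points $y,y'\in\cS^k_{\eta,r}(f)$, the scales at which the normalized energy drops by $\geq\delta$ are generally different, so a single covering ball at scale $j$ simultaneously contains almost-cone points (to which the Covering Lemma's $c_0\gamma^{-k}$ refinement applies) and energy-drop points (for which only the trivial $c_1\gamma^{-n}$ bound holds). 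Since these subsets reshuffle from scale to scale, the naive recursion gives $N(j+1)\leq N(j)\cdot(c_0\gamma^{-k}+c_1\gamma^{-n})$ and the estimate degenerates to $r^{-n}$, losing the codimension gain entirely. The paper's remedy is to decompose $\cS^k_{\eta,\gamma^j}(f)$ into the subsets $\cC^k_{\eta,\gamma^j}(T^j)$ indexed by the binary scale profile $T^j\in\{0,1\}^j$: within each subset the per-scale branching factor is globally determined (so the ball count is $(c_1\gamma^{-n})^{|T^j|}(c_0\gamma^{-k})^{j-|T^j|}$), and---crucially---the quantitative differentiation estimate $|T^j|\leq K$ bounds the number of nonempty profiles by ${j\choose K}\leq j^K$, a polynomial overcount that is then absorbed. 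Without this grouping by profile it is not clear how the induction can be carried out, and the paper says so explicitly.

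There is a second, more mechanical problem: the dyadic scale factor $2$, together with your claim that $\eta'\to0$ as $\delta\to0$, cannot be made to work. The constant $C(n)$ in the per-scale factor $C(n)2^k$ is a packing constant for covering tubular neighborhoods of $k$-planes by half-radius balls; it is independent of $\delta$ and $\epsilon$, so $(C(n))^J\approx r^{-\log_2 C(n)}$ is a fixed positive power of $r$ and the loss $\eta'=\log_2 C(n)$ is not driven to zero by sharpening the rigidity threshold. The paper instead takes the scale factor $\gamma^{-1}$ with $\gamma=c_0(n)^{-2/\eta}$ tuned to the target exponent loss, so that $c_0^j\leq(\gamma^j)^{-\eta/2}$; this, together with the analogous estimate $j^K\leq c(\gamma^j)^{-\eta/2}$, is what lets the accumulated packing constants and the profile count both disappear into the $\eta$-loss.
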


\subsection{Quantitative estimates for the regularity scale}
In order to state the  consequences of Theorem \ref{t:harm_min_regularity}
 we first define the notion of the "regularity scale"
 of a function. This entails a refinement
of the notion of a pointwise $C^2$-bound.  A bound on the regularity
scale of $f$ at $x\in M^n$ controls the behavior of $f$ not just at $x$, but also
on a certain ball $B_{r_f(x)}(x)$.
Clearly, controlling $r_f(x)$ from below
is harder than controlling the pointwise $C^2$-norm
at $x$. Correspondingly, such control gives much stronger information.


Given any measurable map $f:M^n\rightarrow N^m$, put $r_{0,f}(x)=0$, if
$f$ is not $C^2$ in a neighborhood of $x$.  Otherwise define
$r_{0,f}(x)$ to be the maximum of $r>0$ such that $f$ is $C^2$ on $B_r(x)$.

\begin{definition}
\label{d:harm_reg_scale}
Define the regularity scale $r_f(x)$ by
\begin{align}
 r_f(x):=\max\left\{0\leq r\leq r_{0,f}(x): \sup_{B_r(x)}r|\nabla f|+r^2|\nabla^2 f|\leq 1\right\}.
\end{align}
\end{definition}

Note that the quantity whose supremum is
being taken  is a scale invariant quantity. Therefore, if $r_f(x)=r$ and
we rescale $B_r(x)$ to a ball of unit size and view $f:B_1(x)\rightarrow N^m$,
then $|\nabla f|+|\nabla^2 f|\leq 1$ on $B_1(x)$.
Also observe that if $f$ is a weakly harmonic map then, by standard elliptic regularity, if $r_f(x)\geq r$ then for all $k\in \dZ+$,
\begin{equation}
\label{ee}
\sup_{B_{\frac{r}{2}}(x)}r^k|\nabla^{k} f|\leq C_k\, ,
\end{equation}
where the constant $C_k$ depends possibly on the curvature and derivatives of the curvature on both $M^m$ and $N^m$.  In particular, a lower bound on the regularity scale at a point gives bounds for all derivatives of a weakly harmonic map in a definite sized neighborhood of that point.

Next we partition $M^n$ into good and bad sets based on the behavior of $f$.
\begin{definition}
Given any measurable map $f:M^n\rightarrow N^m$ and any $r>0$ we define
\begin{align}
 \cB_{r}(f):= \{x\in M: r_{f}(x)\leq r\}.
\end{align}
\end{definition}

The following is the principle application of our main theorem. It strengthens the
 Hausdorff dimension estimates on the singular set of a minimizing harmonic map
which were given in \cite{SchoenUhlenbeck_RegularityHarmonic}, to
corresponding lower bounds on the regularity scale off sets of appropriately small
volume.
\begin{theorem}
\label{t:harm_min_regularity}
Let $f:B_2(x)\subseteq M^n\rightarrow N^m$ denote a minimizing harmonic map with
bounded Dirichlet energy as in (\ref{deb}).
  Then for all $\eta>0$, there exists $C=C(n,N^m,\Lambda,\eta)$,
such that  for any $0<r< 1$,
\begin{enumerate}
 \item $\Vol(T_r(\cB_{r}(f))\cap B_{1}(x))\leq Cr^{3-\eta}$.

In particular, for minimizing harmonic maps, we get the Minkowski dimension
bound
\begin{equation}
\label{md}
\dim_{{\rm Min}}\cS(f)\leq n-3\, .
\end{equation}
 \item Moreover, if $N^m$ is such that for all $1\leq \ell\leq k$ there
 exists no smooth minimizing harmonic map $s:S^\ell\rightarrow N^m$,
then $\Vol(\cB_{r}(f)\cap B_{1}(x))\leq Cr^{3+k-\eta}$.
\end{enumerate}
\end{theorem}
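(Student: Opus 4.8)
The plan is to derive both parts from the quantitative stratification of Theorem~\ref{t:harm_quant_strat} by pairing it with $\epsilon$-regularity theorems of the type established in Section~\ref{s:harm_monotonicity}. The input I would isolate first is: there exist $\epsilon=\epsilon(n,N^m,\Lambda)>0$ and $\gamma=\gamma(n,N^m,\Lambda)>0$ such that if $f$ is minimizing with $\int_{B_2(x)}|\nabla f|^2<\Lambda$ and $f$ is $(\epsilon,s,n-2)$-homogeneous on $B_{2s}(y)$ for some $0<s<\tfrac12$, then $r_f(y)\ge\gamma s$ (Definition~\ref{d:harm_reg_scale}). This should be a short contradiction argument: a genuinely $(n-2)$-homogeneous finite-energy minimizing map $\dR^n\to N^m$ must be constant, since its cross section is a $0$-homogeneous finite-energy map on $\dR^2$ and the divergence of $\int_0 r^{-1}\,dr$ forces constancy --- this is exactly the mechanism behind $\cS(f)=\cS^{n-3}(f)$ in (\ref{cd3}) --- so if the conclusion failed along a sequence, blow-up and compactness would produce a nonconstant such map; the classical small-energy $\epsilon$-regularity of \cite{SchoenUhlenbeck_RegularityHarmonic} together with the interior elliptic estimates (\ref{ee}) then promote $L^2$-closeness to an actual lower bound on $r_f$.

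Granting this, Part~(1) is immediate. If $r_f(y)<\gamma r$, then for every scale $s\in[r,1]$ the map $f$ cannot be $(\epsilon,s,n-2)$-homogeneous on $B_{2s}(y)$ --- otherwise $r_f(y)\ge\gamma s\ge\gamma r$ --- i.e.\ $y\in\cS^{n-3}_{\epsilon,r}(f)$. Adjusting constants this reads $\cB_r(f)\subseteq\cS^{n-3}_{\epsilon,Cr}(f)$, hence $T_r(\cB_r(f))\subseteq T_{Cr}\big(\cS^{n-3}_{\epsilon,Cr}(f)\big)$, and Theorem~\ref{t:harm_quant_strat} with $k=n-3$ and radius $Cr$ gives $\Vol(T_r(\cB_r(f))\cap B_1(x))\le C'r^{3-\eta}$. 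For (\ref{md}): since $\cS(f)=\bigcap_{r}\cB_r(f)\subseteq\cB_r(f)$, a maximal $r$-separated subset of $\cS(f)\cap B_1(x)$ carries disjoint $\tfrac{r}{2}$-balls contained in $T_{r/2}(\cB_{r/2}(f))$, each of volume $\gtrsim r^n$ by (\ref{con:sectional}) and Bishop--Gromov, so $\cS(f)\cap B_1(x)$ is covered by $\lesssim r^{-(n-3)-\eta}$ balls of radius $r$; letting $\eta\downarrow 0$ yields $\dim_{{\rm Min}}\cS(f)\le n-3$.

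For Part~(2) I would run the same scheme with a sharpened $\epsilon$-regularity theorem, now valid up to homogeneity degree $n-2-k$, obtained from iterated dimension reduction. The point is that under the hypothesis on $N^m$ a finite-energy $(n-2-k)$-homogeneous minimizing map $\dR^n\to N^m$ is still forced to be constant: its cross section is a minimizing harmonic map of a sphere, and successively blowing up at points of its singular set --- organized by the Cone-splitting Lemma of Section~\ref{s:harm_Reg2Stratreduction} --- eventually produces a nonconstant minimizing harmonic map $S^\ell\to N^m$ with $1\le\ell\le k$, which is excluded; so the cross section is smooth with no admissible tangent maps, hence constant. This gives $\epsilon',\gamma'$ for which $(\epsilon',s,n-2-k)$-homogeneity of $f$ on $B_{2s}(y)$ forces $r_f(y)\ge\gamma' s$, so $\cB_r(f)\subseteq\cS^{n-3-k}_{\epsilon',Cr}(f)$, and Theorem~\ref{t:harm_quant_strat} with $k'=n-3-k$ produces $\Vol(T_r(\cB_r(f))\cap B_1(x))\le C r^{3+k-\eta}$, which dominates $\Vol(\cB_r(f)\cap B_1(x))$ since $\cB_r(f)\subseteq T_r(\cB_r(f))$. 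The main obstacle is exactly this sharpened $\epsilon$-regularity theorem: one must check that the iterated dimension reduction genuinely terminates at a sphere of admissible dimension, and --- via the Decomposition Lemma --- that the approximate symmetry it supplies can be upgraded to $C^2$-smallness on a definite ball, as a lower bound on $r_f$ requires. Closely tied to this, and also delicate, is the bookkeeping of scales in passing between the $L^2$-averaged quantities defining $\cS^k_{\eta,r}(f)$ and the sup-norm quantity defining $r_f$, which again runs through (\ref{ee}).
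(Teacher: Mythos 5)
Your proof of Part~(1) and of the Minkowski-dimension bound~(\ref{md}) is correct and runs along exactly the same lines as the paper's: combine the $\epsilon$-regularity (Theorem~\ref{t:harm_eps_regularity}, which you re-derive by the same blow-up/compactness contradiction the paper uses) to obtain an inclusion $\cB_r(f)\subseteq\cS^{n-3}_{\epsilon,r'}(f)$ with $r'\asymp r$, and then feed this into Theorem~\ref{t:harm_quant_strat}. Your version is slightly more careful about the scale bookkeeping than the paper's two-line proof, and you correctly write the inclusion $\cB_r\subseteq\cS^{n-3}_{\eta,r}$ in the forward direction (the paper's text has the implication reversed, an evident slip since the displayed conclusion $T_r(\cB_r)\subseteq T_r(\cS^{n-3}_{\eta,r})$ needs the direction you wrote). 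The packing argument you supply for~(\ref{md}) is a correct expansion of the paper's ``In particular.''

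For Part~(2) there is a genuine gap, and you correctly anticipated where it sits. Your $\epsilon$-regularity posits that a finite-energy $(n-2-k)$-homogeneous minimizing map $\dR^n\to N^m$ is constant under the hypothesis that there are no smooth nonconstant minimizing maps $s:S^\ell\to N^m$ for $\ell\le k$. But the cross-section of an $(n-2-k)$-homogeneous map is a map from $S^{\,n-(n-2-k)-1}=S^{k+1}$, a sphere of dimension {\it one more} than the hypothesis excludes. The iterated dimension reduction you invoke only produces a contradiction if it can pass {\it below} dimension $k+1$; if the cross section is already a smooth nonconstant map $S^{k+1}\to N^m$, there is no further tangent-map structure to exploit, and your final step (``the cross section is smooth with no admissible tangent maps, hence constant'') does not follow. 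Indeed the sharp example in Remark~\ref{rsharp} --- the radial extension of the identity $S^{k+1}\to S^{k+1}$ --- is exactly a nonconstant $(n-2-k)$-homogeneous minimizing map with smooth cross section, showing the $\epsilon$-regularity at that level of homogeneity is simply false under the stated hypothesis. The paper's actual $\epsilon$-regularity, Theorem~\ref{t:harm_eps_regularity}(2), uses $(n-k-1)$-homogeneity (cross section on $S^k$, which {\it is} excluded), and feeding that into Theorem~\ref{t:harm_quant_strat} yields $\cB_r(f)\subseteq\cS^{n-k-2}_{\eta,r}(f)$ and hence the exponent $r^{2+k-\eta}$, not $r^{3+k-\eta}$. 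The exponent $3+k-\eta$ in the displayed statement of Theorem~\ref{t:harm_min_regularity}(2) appears to be a misprint: the exponent $2+k-\eta$ is the one consistent with Theorem~\ref{t:harm_eps_regularity}(2), with the $L^p$ bound $p<2+k$ of Corollary~\ref{c:harm_min_regularity}(2), and with the sharpness example. So the fix is to run your Part~(1) argument verbatim with $(n-k-1)$ in place of $n-2$, and to correct the exponent to $2+k-\eta$ rather than to try to force an $\epsilon$-regularity at one level of homogeneity beyond what the hypothesis supports.
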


The following stronger consequence follows directly from Theorem \ref{t:harm_min_regularity} and elliptic regularity
theory.
\begin{corollary}\label{c:harm_min_regularity}
 Let $f:B_2(x)\subseteq M^n\rightarrow N^m$ denote a minimizing harmonic map with Dirichlet
 energy bounded as in (\ref{deb}).  Then:
\begin{enumerate}
 \item For every $0<p<3$ there exists $C=C(n,N^m,\Lambda,p)$ such that
$$
\int_{B_1(x)}|\nabla f|^p\leq \int_{B_1(x)}r_{f}^{-p}<C\, .
$$
 \item More generally, there exists $C=C(n,N^m,\Lambda,p)$
such that if $N^m$ is such that for all $2\leq \ell\leq k$ there exists
 no smooth minimizing harmonic map, $s:S^\ell\rightarrow N^m$, then for every
$0<p<2+k$,
$$
\int_{B_1(x)}|\nabla f|^p\leq \int_{B_1(x)}r_{f}^{-p}<C\, .
$$
\end{enumerate}
Moreover (by elliptic regularity) there exists $C=C(n,N^m,\Lambda,p,k)$ such that for $2<p$ as above,
\begin{equation}
\label{er}
\int_{B_{1}(x)}|\nabla^2 f|^{\frac{p}{2}}<C\, .
\end{equation}
\end{corollary}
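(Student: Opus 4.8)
The plan is to derive the $L_p$ bounds on $r_f^{-1}$ directly from the Minkowski-content estimates of Theorem \ref{t:harm_min_regularity} by a standard layer-cake (dyadic decomposition) argument, and then to obtain the corresponding bounds on $|\nabla f|$ and $|\nabla^2 f|$ from the scale-invariant elliptic estimate \eqref{ee}--\eqref{ee} (more precisely, the implication that $r_f(x)\geq r$ forces $\sup_{B_{r/2}(x)} r|\nabla f|+r^2|\nabla^2 f|\leq 1$, hence $|\nabla f|(x)\leq r_f(x)^{-1}$ and $|\nabla^2 f|(x)\leq r_f(x)^{-2}$ at almost every point). Thus the pointwise inequalities $|\nabla f|\leq r_f^{-1}$ and $|\nabla^2 f|^{p/2}\leq r_f^{-p}$ hold a.e.\ on $B_1(x)$, and it suffices to bound $\int_{B_1(x)} r_f^{-p}$.

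For the first item, I would write, for any measurable nonnegative function,
\begin{equation*}
\int_{B_1(x)} r_f^{-p}
= p\int_0^\infty t^{p-1}\,\Vol\bigl(\{y\in B_1(x): r_f(y)^{-1}>t\}\bigr)\,dt
= p\int_0^\infty t^{p-1}\,\Vol\bigl(\cB_{1/t}(f)\cap B_1(x)\bigr)\,dt\, ,
\end{equation*}
where I have used $\{r_f^{-1}>t\}=\{r_f<1/t\}\subseteq\cB_{1/t}(f)$ (together with the fact that on $B_1(x)$ the regularity scale is bounded, so only $t\gtrsim 1$ contributes and the integral near $t=0$ is harmless, the complementary region contributing at most $C$). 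Passing to the dyadic scales $r=2^{-j}$, $j\geq 0$, and using $\cB_r(f)\subseteq T_r(\cB_r(f))$ together with part (1) of Theorem \ref{t:harm_min_regularity}, namely $\Vol(T_r(\cB_r(f))\cap B_1(x))\leq C r^{3-\eta}$, the integral is dominated by $\sum_{j\geq 0} 2^{jp}\cdot C\,2^{-j(3-\eta)} = C\sum_{j\geq 0} 2^{-j(3-\eta-p)}$, which converges precisely when $p<3-\eta$; since $\eta>0$ is arbitrary this gives convergence for all $p<3$, with the constant depending on $n,N^m,\Lambda,p$ (choosing, say, $\eta=(3-p)/2$). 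The second item is identical but uses part (2) of Theorem \ref{t:harm_min_regularity}, $\Vol(\cB_r(f)\cap B_1(x))\leq C r^{3+k-\eta}$, which replaces the exponent $3$ by $3+k$ and yields convergence for $p<2+k$ (note the shift: the hypothesis in Corollary part (2) starts at $\ell=2$ rather than $\ell=1$, matching the $L_p$ threshold $2+k$). The final inequality \eqref{er} is then immediate: $\int_{B_1(x)}|\nabla^2 f|^{p/2}\leq \int_{B_1(x)} r_f^{-p}<C$ for the same range of $p$.

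The only genuinely delicate point is the measurability of $r_f$ and the passage from the tube estimate to the superlevel-set estimate; but $r_f$ is upper semicontinuous where finite (a decreasing limit of suprema of continuous quantities over shrinking balls), so its superlevel sets are measurable, and the inclusion $\cB_r(f)\subseteq T_r(\cB_r(f))$ is trivial. Everything else is the routine layer-cake computation above together with the elliptic-regularity input already recorded in the excerpt, so no step presents a real obstacle; the content of the corollary lies entirely in Theorem \ref{t:harm_min_regularity}.
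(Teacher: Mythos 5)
Your proof is correct and is exactly the argument the authors intend (the paper simply states that the corollary ``follows directly from Theorem \ref{t:harm_min_regularity} and elliptic regularity theory'' without supplying details). The chain you use is the standard one: the pointwise inequalities $|\nabla f|\le r_f^{-1}$, $|\nabla^2 f|\le r_f^{-2}$ (immediate from the definition of $r_f$, evaluating the scale-invariant supremum at the center), the layer-cake identity for $\int r_f^{-p}$, the inclusion $\{r_f<1/t\}\subseteq\cB_{1/t}(f)\subseteq T_{1/t}(\cB_{1/t}(f))$, the tube-volume bound $\Vol(T_r(\cB_r(f))\cap B_1(x))\le Cr^{3-\eta}$ from Theorem \ref{t:harm_min_regularity}, and finally the choice $\eta=(3-p)/2$ to absorb the arbitrary-but-positive loss. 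Your remark that the $\ell$-range in the theorem's part (2) and the corollary's part (2) are shifted by one (and that this shift is what turns the theorem's exponent $3+k-\eta$ into the corollary's threshold $2+k$) correctly flags a notational inconsistency in the paper; the $\epsilon$-regularity Theorem \ref{t:harm_eps_regularity} and the sharpness example in Remark \ref{rsharp} both use the $2\le\ell\le k$ convention with threshold $2+k$, so your reindexing is the right reconciliation. The measurability observation about upper semicontinuity of $r_f$ is a genuine (if minor) point that needs to be at least noticed, and your justification is adequate.
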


\begin{remark}
\label{rsharp}
 The above $L^p$ estimates are sharp.  For instance, for each $k\geq 1$, consider the map
$$
f:B_1(0^{k+2})\rightarrow S^{k+1}\, ,
$$
where  $S^{k+1}$ is the unit $(k+1)$-sphere, such that $f$ is an isometry on the boundary and
constant in the radial direction.  This map is a minimizing harmonic map with $N^m=S^{k+1}$ satisfying the condition that there are no smooth minimizing harmonic maps from $S^\ell$ into $N^m$ for all $2\leq \ell\leq k$.  The gradient $|\nabla f|$ can easily be checked to lie in $L^p$ for all $p<2+k$. However, it fails to lie in $L^{2+k}$.
\end{remark}

We will focus primarily on giving complete details of the proofs in the case $M^n=\dR^n$.
 Since the proof in the
general case is essentially the same (up to the appearance of some additional insignificant constants)
 for the general case, we will just give additional comments as needed.

\section{Preliminaries and reduction of main results to  Theorem \ref{t:harm_quant_strat}}
\label{s:harm_prelim}
In this section, we establish some preliminary results
that are required for the proofs of
our main results. These were stated in Section \ref{smrh}. The
preliminaries are  counterparts of  results which played an analogous role in
\cite{cheegernaber_quantstrateistein}.  In that case, the preliminary results,
though less routine, were
already known.

The {\it quantitative rigidity theorem}, Theorem \ref{t:harm_monotone_rigidity},  corresponds to the ``almost volume cone implies almost metric cone" theorem of
\cite{cheegercoldingalmostrigidity}, for manifolds with lower Ricci curvature bounds.  This was used in the proof of the quantitative differentiation theorem of \cite{cheegernaber_quantstrateistein}. A
similar role is played here, by Theorem \ref{t:harm_monotone_rigidity}. The
\hbox{$\epsilon$-regularity} theorem,
 Theorem \ref{t:harm_eps_regularity}, is used in combination with the quantitative stratification bound
given in Theorem \ref{t:harm_quant_strat}, to obtain our main result for the regularity scale for harmonic maps;
see Theorem \ref{t:harm_min_regularity}.  This also parallels the discussion of \cite{cheegernaber_quantstrateistein}.

\subsection{Monotonicity, quantitative rigidity and $\epsilon$-regularity}\label{s:harm_monotonicity}
In this subsection we consider maps $f:B_2(0^n)\subseteq\dR^n\rightarrow N^m$.
Given such an $f$, we  define for $x\in B_1(0^n)$ and $0<r<1$, the normalized Dirichlet functional
\begin{align}
\label{thetadef}
\theta_r(x):= r^{2-n}\int_{B_r(x)}\left |\nabla f\right |^2.
\end{align}
It is well known that if $f$ is a stationary harmonic map, then for each $x$ the function
$\theta_r(x)$ is monotone nondecreasing in $r$ and satisfies the monotonicity formula:
\begin{equation}
\label{thetamonotone}
\theta_r(x)-\theta_s(x)\, = \int_s^r\int_{\partial B_t(x)}t^{2-n}\left|\frac{\partial f}{\partial t}\right |^2\, .
\end{equation}
In particular,
$$
\theta_r(x)\uparrow\, ,
$$
and
$\theta_s(x)=\theta_r(x)$ if and only if $f$ is radially constant on the annulus $A_{s,r}(x)$; see  \cite{HardtWolf_Nonlinear},\cite{SchoenUhlenbeck_RegularityHarmonic} .

The following quantitative rigidity theorem is a direct consequence of the above,
together with a contradiction argument.

\begin{theorem}[Quantitative rigidity]
\label{t:harm_monotone_rigidity}
Let $f:B_2(0^n)\rightarrow N^m$ denote a stationary harmonic map with Dirichlet energy bounded
as in (\ref{deb}). Then
for every $\eta>0$, there exists $\epsilon=\epsilon(n,N^m,\Lambda,\eta)>0$,
$r=r(n,N^m,\Lambda,\eta)>0$,
such that if
\begin{align}
\theta_1(0^n)-\theta_r(0^n)\leq \epsilon,
\end{align}
then $f$ is $(\eta,1,0)$-homogeneous at $0^n$.
\end{theorem}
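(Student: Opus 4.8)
The plan is to argue by contradiction and compactness, in the usual spirit of such ``almost rigidity implies almost symmetry'' statements. Suppose the conclusion fails for some fixed $\eta>0$. Then for every $i$ there is a stationary harmonic map $f_i:B_2(0^n)\to N^m$ with $\int_{B_2(0^n)}|\nabla f_i|^2<\Lambda$ and with $\theta_1^{f_i}(0^n)-\theta_{r_i}^{f_i}(0^n)\le \epsilon_i$, where we choose $\epsilon_i\to 0$ and $r_i\to 0$, yet $f_i$ is \emph{not} $(\eta,1,0)$-homogeneous at $0^n$, i.e.\ $\fint_{B_1(0^n)}\dist(T_{0^n,1}f_i,h)^2\ge \eta$ for every $0$-homogeneous $h$. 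By the standard compactness theory for stationary harmonic maps with bounded Dirichlet energy (Schoen--Uhlenbeck, and the energy-quantization/defect-measure analysis of Lin), after passing to a subsequence $f_i$ converges strongly in $W^{1,2}_{\mathrm{loc}}(B_2(0^n))$ to a stationary harmonic map $f_\infty:B_2(0^n)\to N^m$; strong convergence is what makes the monotone quantity $\theta_r(0^n)$ pass to the limit, $\theta_r^{f_i}(0^n)\to\theta_r^{f_\infty}(0^n)$ for each fixed $r\in(0,1]$.

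Next I would extract the rigidity. For any fixed $s\in(0,1]$, once $i$ is large enough that $r_i<s$, monotonicity gives
\begin{equation}
\theta_1^{f_i}(0^n)-\theta_s^{f_i}(0^n)\ \le\ \theta_1^{f_i}(0^n)-\theta_{r_i}^{f_i}(0^n)\ \le\ \epsilon_i\, .
\end{equation}
Letting $i\to\infty$ yields $\theta_1^{f_\infty}(0^n)=\theta_s^{f_\infty}(0^n)$ for all $s\in(0,1]$, so $\theta_r^{f_\infty}(0^n)$ is constant on $(0,1]$. By the equality case of the monotonicity formula \eqref{thetamonotone}, $f_\infty$ is radially constant on the annulus $A_{0,1}(0^n)=B_1(0^n)\setminus\{0^n\}$, i.e.\ $f_\infty(\lambda z)=f_\infty(z)$ for all $\lambda>0$ and $z$ with $0<|z|<1$. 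Thus $f_\infty$ restricted to $B_1(0^n)$ is (the restriction of) a $0$-homogeneous map $h_\infty:\dR^n\to N^m$, obtained by $0$-homogeneous extension. Taking $h=h_\infty$ we get $\fint_{B_1(0^n)}\dist(f_\infty,h_\infty)^2=0$.

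Finally I would derive the contradiction with the non-homogeneity of the $f_i$. Since $T_{0^n,1}f_i=f_i$ on $B_1(0^n)$ and $f_i\to f_\infty$ strongly in $W^{1,2}$, hence in $L^2(B_1(0^n))$, and since $N^m$ is compact so that $\dist(\cdot,\cdot)$ is bounded, dominated convergence gives
\begin{equation}
\fint_{B_1(0^n)}\dist\!\big(T_{0^n,1}f_i,\,h_\infty\big)^2\ \longrightarrow\ \fint_{B_1(0^n)}\dist\!\big(f_\infty,\,h_\infty\big)^2\ =\ 0\, ,
\end{equation}
so for $i$ large this is $<\eta$ with the $0$-homogeneous competitor $h=h_\infty$, contradicting $f_i\notin(\eta,1,0)$-homogeneous at $0^n$. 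This proves the theorem. The main obstacle — really the only non-formal point — is justifying the \emph{strong} $W^{1,2}_{\mathrm{loc}}$ compactness (equivalently, no energy concentration in the interior of $B_1$), so that the monotone Dirichlet density $\theta_r(0^n)$ is continuous under the limit; this is where one invokes the stationary-harmonic-map compactness/quantization results rather than mere weak convergence, and one must check that the hypothesis $\theta_1^{f_i}(0^n)-\theta_{r_i}^{f_i}(0^n)\to 0$ is compatible with (indeed forces) the absence of concentration at the origin as well.
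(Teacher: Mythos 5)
Your overall strategy -- contradiction plus compactness, extract a limit that is exactly radially constant, then contradict the non-$(\eta,1,0)$-homogeneity of the $f_i$ via strong $L^2$ convergence -- is the same as the paper's. However, there is a genuine error in the middle step: you invoke \emph{strong} $W^{1,2}_{\rm loc}$ compactness for stationary harmonic maps with bounded energy, and this is false in general. For stationary (as opposed to minimizing) harmonic maps, the limit of the energy densities $|\nabla f_i|^2\,dx$ is $|\nabla f_\infty|^2\,dx+\nu$ where the defect measure $\nu$ need not vanish; Lin's theorem gives that $\nu$ is $(n-2)$-rectifiable, not that $\nu=0$, so strong $W^{1,2}_{\rm loc}$ convergence can fail even on annuli well away from $0^n$. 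The hypothesis $\theta_1^{f_i}(0^n)-\theta_{r_i}^{f_i}(0^n)\to 0$ controls the \emph{radial} part of the energy centered at $0^n$, but says nothing about concentration in the tangential directions elsewhere in the ball, so it does not rule out a nonzero defect measure. Your closing caveat correctly flags this as the point needing justification, but the justification is not available.

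The fix, which is what the paper actually does, is to notice that strong $W^{1,2}$ convergence (and hence continuity of $\theta_r(0^n)$ under the limit) is not needed. You only need weak $H^1$ convergence (plus strong $L^2$ convergence, which follows from Rellich--Kondrachov and the energy bound). The hypothesis and (\ref{thetamonotone}) give
\begin{equation}
\int_{A_{s,1}(0^n)} |z|^{2-n}\,\Bigl|\frac{\partial f_i}{\partial|z|}\Bigr|^2 \;\longrightarrow\; 0
\end{equation}
for each fixed $s>0$, and since $\partial f_i/\partial|z|\rightharpoonup \partial f_\infty/\partial|z|$ weakly in $L^2$ on that annulus, weak lower semicontinuity of the weighted $L^2$ norm forces $\partial f_\infty/\partial|z|=0$ a.e.\ there; letting $s\to 0$ shows $f_\infty$ is radially constant on all of $B_1(0^n)$. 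This one-sided (lower-semicontinuity) inequality is all that is needed, so the deeper energy-quantization machinery is both unavailable and unnecessary. The rest of your argument (strong $L^2$ convergence of $f_i$ to the $0$-homogeneous $f_\infty$, contradicting $\fint_{B_1}\dist(f_i,h)^2\ge\eta$ for all $0$-homogeneous $h$) then goes through exactly as you wrote it.
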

\begin{proof}
Assume the statement is false for some $\eta>0$.  Then there exists a sequence of stationary harmonic maps $f_i:B_2(0^n)\rightarrow N^m$ with energy bounded as in (\ref{deb}) with
$\theta_1(0^n)-\theta_{i^{-1}}(0^n)\leq i^{-1}$,
 but such that the $f_i$ are not $(\eta,1,0)$-homogeneous.  After passing to a subsequence we can take
$f_i\rightarrow f$,
where $f:B_2(0^n)\to N^m$ and the convergence is weak in $H^1$ and {\it strong} in $L^2$.  It follows from
 the weak convergence and the monotonicity formula (\ref{thetamonotone}), that $f$ is itself constant in the radial direction.
  In particular, $f$ is $0$-homogeneous, and by the {\it strong} convergence in $L^2$ it
follows that for $i$ sufficiently large the $f_i$ are $(\eta,1,0)$-homogeneous, a contradiction.
\end{proof}

\begin{remark}
\label{ca}
Contradiction arguments of the above type are applicable in many cases in which
one wishes to promote a rigidity theorem to a quantitative rigidity theorem
(as is required when proving a quantitative differentiation theorem).
  However, it is not known whether this sort of argument is applicable in the context of
the "almost volume cone implies almost metric cone" theorem of \cite{cheegercoldingalmostrigidity}.
\end{remark}

\begin{remark}
We also note that by their nature, contradiction arguments do not give explicit dependence of the relevant constants (such as $\epsilon,\,r$ above) on the small parameter ($\eta$ above). Even in cases in which a direct argument giving such dependence is possible, it may be extremely tedious and may involve additional technical difficulties.    While for geometric analytic applications like those discussed here, such dependence is not required, for the application to the "sparsest cut problem" given in \cite{CKN}, it is crucial. There, obtaining the desired estimate for this dependence is by far the most technically difficult part of the argument.
\end{remark}


The following $\epsilon$-regularity theorem is not based on the usual
small energy assumption, but rather on the almost homogeniety of the
minimizing harmonic map.  Roughly speaking, it states that a minimizing
harmonic map with sufficient approximate symmetry must be smooth.

\begin{theorem}[$\epsilon$-regularity]
\label{t:harm_eps_regularity}
Let $f:B_{2}(0^n)\rightarrow N^m$ denote a minimizing harmonic map with bounded
Dirichlet energy as in (\ref{deb}).
Then there exists
 $\epsilon=\epsilon(n,N^m,\Lambda)>0$ such that
\begin{enumerate}
\item If $f$ is $(\epsilon,2,n-2)$-homogeneous then
$$
r_f(0)\geq 1\, .
$$
\item Moreover, if $N^m$ is such that for all $2\leq \ell\leq k$ there exists no
smooth minimizing harmonic map $s:S^\ell\rightarrow N^m$ and $f$
is $(\epsilon,2,n-k-1)$-homogeneous then
$$
r_f(0)\geq 1\, .
$$
\end{enumerate}
\end{theorem}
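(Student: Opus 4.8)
The plan is to argue by contradiction, exactly in the spirit of the proof of the quantitative rigidity theorem above, and to reduce both statements to the known $\epsilon$-regularity and dimension-reduction results of Schoen--Uhlenbeck \cite{SchoenUhlenbeck_RegularityHarmonic}. I will focus on item (1); item (2) follows by the same scheme with the improved singular-set dimension coming from the hypothesis on $N^m$. So suppose (1) fails for some fixed $n, N^m, \Lambda$. Then there is a sequence of minimizing harmonic maps $f_i : B_2(0^n)\to N^m$ with $\int_{B_2(0^n)}|\nabla f_i|^2<\Lambda$, which are $(i^{-1},2,n-2)$-homogeneous, but with $r_{f_i}(0)<1$. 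By definition of $(\epsilon,r,k)$-homogeneity there exist $(n-2)$-homogeneous maps $h_i:\dR^n\to N^m$ with $\fint_{B_1(0^n)}\dist(T_{0,2}f_i,h_i)^2<i^{-1}$.

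The key step is to pass to a limit and exploit the compactness theory for minimizing harmonic maps. After passing to a subsequence, $f_i\to f_\infty$ weakly in $H^1$, strongly in $L^2$, and in fact strongly in $H^1_{\mathrm{loc}}$ away from a closed set of codimension $\geq 3$ (Schoen--Uhlenbeck), with $f_\infty$ itself a minimizing harmonic map on $B_2(0^n)$. The bound $\fint_{B_1(0^n)}\dist(T_{0,2}f_i,h_i)^2<i^{-1}$ together with the monotonicity formula forces the $h_i$ to have uniformly bounded normalized energy, so a further subsequence gives $h_i\to h_\infty$ with $h_\infty$ an $(n-2)$-homogeneous minimizing harmonic map, and $T_{0,2}f_\infty = h_\infty$ (using strong $L^2$ convergence to match the limits). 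Thus $f_\infty$ is, after the rescaling $T_{0,2}$, genuinely $(n-2)$-homogeneous: it is invariant under an $(n-2)$-plane $V^{n-2}$ and $0$-homogeneous. Its singular set is therefore a cone invariant under $V^{n-2}$; since $\dim\cS(f_\infty)\leq n-3$ by (\ref{cd3}), this singular set must be exactly $V^{n-2}$ union possibly lower-dimensional pieces — but a $0$-homogeneous map singular along an $(n-2)$-plane, restricted to the orthogonal $2$-plane, is a $0$-homogeneous map $\dR^2\to N^m$ smooth away from the origin. By the classical fact that a tangent map on $\dR^2$ smooth off $0^n$ must be constant (it factors through a harmonic map $S^1\to N^m$ of finite energy that is also minimizing, hence a point), $f_\infty$ is constant, hence smooth on $B_1(0^n)$ with $r_{f_\infty}(0)$ bounded below by a definite constant, say $r_{f_\infty}(0)\geq 3/2$.

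Finally I would upgrade this to the same conclusion for $f_i$ with $i$ large. Here I invoke the $\epsilon$-regularity theorem of \cite{SchoenUhlenbeck_RegularityHarmonic}: since $f_\infty$ is smooth on $B_{3/2}(0^n)$ with definite bounds, there is $\delta>0$ so that $\theta_{\delta}(y)<\epsilon_{SU}$ for all $y\in B_1(0^n)$, where $\epsilon_{SU}$ is the Schoen--Uhlenbeck small-energy threshold. By strong $H^1_{\mathrm{loc}}$ convergence on $B_{3/2}(0^n)$ (valid because the limit is regular there, so no energy concentrates), $\theta_\delta^{f_i}(y)<\epsilon_{SU}$ for all $y\in B_1(0^n)$ once $i$ is large; the $\epsilon$-regularity theorem then gives uniform interior $C^2$ (indeed $C^\infty$) bounds for $f_i$ on $B_1(0^n)$, hence $r_{f_i}(0)\geq 1$ for $i$ large, contradicting $r_{f_i}(0)<1$. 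The main obstacle is making the convergence $f_i\to f_\infty$ strong enough near $0^n$ to transfer the regularity scale bound — i.e. ruling out energy concentration along the limiting singular set $V^{n-2}$; this is handled precisely because the almost-homogeneity hypothesis forces the limit's singular set to sit inside $V^{n-2}$ while the Schoen--Uhlenbeck dimension bound (\ref{cd3}) says it cannot fill out an $(n-2)$-plane, so in fact the limit is regular on all of $B_1(0^n)$ and there is no concentration set to worry about. For part (2), the only change is that the hypothesis ``no smooth minimizing $s:S^\ell\to N^m$ for $2\leq\ell\leq k$'' upgrades the Schoen--Uhlenbeck stratification bound to $\dim\cS(f_\infty)\leq n-k-2$, so that an $(n-k-1)$-homogeneous limit is again forced to be constant by the same slicing argument on the orthogonal $\dR^{k+1}$.
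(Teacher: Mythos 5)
Your proposal follows the same overall scheme as the paper's proof: a compactness/contradiction argument in which a sequence $f_i$ that is $(\epsilon_i,2,n-2)$-homogeneous but has $r_{f_i}(0)<1$ is shown to subconverge to a minimizing $(n-2)$-homogeneous map $f_\infty$, which is then shown to be constant, contradicting the lower bound on $r_{f_i}$ for large $i$. Where you diverge is at two points, one of which contains a small but real error, and one of which is a legitimately different mechanism.

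First, you assert that the comparison maps $h_i$ have ``uniformly bounded normalized energy'' via the monotonicity formula. This is not justified: the $h_i$ in the definition of $(\epsilon,r,k)$-homogeneity are merely measurable $(n-2)$-homogeneous maps, not harmonic maps, so neither the monotonicity formula nor any energy bound applies to them. The correct reason the $L^2$ limit $T_{0,2}f_\infty$ is $(n-2)$-homogeneous is that $T_{0,2}f_i\to T_{0,2}f_\infty$ strongly in $L^2(B_1)$, hence $h_i\to T_{0,2}f_\infty$ in $L^2(B_1)$; after passing to a subsequence so that the defining planes $V_i^{n-2}$ converge in the Grassmannian, the set of $0$-homogeneous maps invariant under a fixed plane is closed under $L^2$ convergence, so the limit is $(n-2)$-homogeneous. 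No energy bound on $h_i$ is needed or available. Similarly, your description of $\mathcal S(f_\infty)$ as ``$V^{n-2}$ union possibly lower-dimensional pieces'' is off: since $\mathcal S(f_\infty)$ is a dilation-invariant set invariant under $V^{n-2}$, it is $V^{n-2}+C$ for some cone $C\subseteq\dR^2$; any nonempty $C$ forces $\dim\mathcal S(f_\infty)\ge n-2$, which contradicts (\ref{cd3}), so $\mathcal S(f_\infty)=\emptyset$ outright and the slicing discussion is unnecessary. (The paper simply cites \cite{SchoenUhlenbeck_RegularityHarmonic} for the conclusion that a minimizing $(n-2)$-homogeneous map is constant.)

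Second, for the upgrade step you invoke the small-energy $\epsilon$-regularity of Schoen--Uhlenbeck together with strong $H^1_{\rm loc}$ convergence of minimizers. This works (strong $H^1_{\rm loc}$ convergence of energy-bounded minimizers to a minimizing limit is a general fact, not contingent on the limit being regular as your parenthetical suggests), but it is heavier than necessary: the paper's base case is an $L^2$-based $\epsilon$-regularity statement --- if $\fint_{B_2}\dist(f,w)^2$ is small for some $w\in N^m$ then $r_f(0)\ge 1$ --- and since the limit $f_\infty$ is a constant, strong $L^2$ convergence (which is already available from Rellich) suffices to transfer this to $f_i$ for large $i$. Your route buys nothing here because you still need to establish that $f_\infty$ is constant before either $\epsilon$-regularity theorem applies; given that, the $L^2$ version is the more economical choice.
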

\begin{proof}
Since the proofs of the two statements are identical,  we focus only on the first statement.

First, note that it is a consequence of \cite{HardtWolf_Nonlinear},
\cite{SchoenUhlenbeck_RegularityHarmonic}, that the result holds for $n$-homogeneous maps.  Namely, there exists $\epsilon=
\epsilon(n,N^m,\Lambda)$ such that if for some $w\in N^m$,
$$
\fint_{B_2(0^n)}{\rm dist}(f,w)^2<\epsilon\, ,
$$
then
$$
\sup_{B_1(0^n)}\left(|\nabla f|+|\nabla^2 f|\right)\leq 1\, .
$$

We now show that for appropriate $\epsilon$,
the statement holds for $(\epsilon,2,n-2)$-homogeneous maps.  So assume
otherwise.  Then there exists a sequence $\epsilon_i\leq\frac{1}{i}$ and minimizing harmonic maps
$f_i:B_{2}(0^n)\rightarrow N$ with
$$
\int_{B_{2}(0)}|\nabla f_i|^2<\Lambda\,
$$
such that the $f_i$ are $(\epsilon_i,2,n-2)$-homogeneous, but $r_{f_i}(0)<1$.  After passing to a
subsequence we get
$
f_i\xrightarrow{L^2}f\, ,
$
where $f:B_2(0^n)\rightarrow N^m$ is now a minimizing harmonic map which is $(n-2)$-homogeneous on
$B_2(0)$ and for which the Dirichlet energy bound
(\ref{deb})
holds.  It is shown in \cite{SchoenUhlenbeck_RegularityHarmonic} that such a map is necessarily a constant: $f\equiv z$.  In particular, since the $f_i$ are converging strongly in $L^2$, for $i$ large we conclude that
$$
\fint_{B_2(0)}{\rm dist}(f_i,w)^2<\tilde \epsilon_i\, ,
$$
where $\tilde \epsilon_i\rightarrow 0$.  For $\tilde \epsilon_i<\epsilon(n,N^m,\Lambda)$ as in the beginning of the proof this yields $r_{f_i}(x)\geq 1$, a contradiction.
\end{proof}

\begin{remark}
  In the general case, $f:M^n\rightarrow N^m$,
the assumption that $f$ is $(\epsilon,2,n-2)$-homogeneous should
be replaced by the
assumption that $f$ is $(\epsilon,2r,n-2)$-homogeneous, where
$r\leq r(n,N^m,\Lambda)$.  Then,  the above
contradiction argument with $\epsilon_i,r_i\rightarrow 0$ can be repeated.
After blow up, we are reduced to the case $M^n=\dR^n$.
\end{remark}

\subsection{Reduction of Theorem \ref{t:harm_min_regularity} to Theorem \ref{t:harm_quant_strat}}
\label{s:harm_Reg2Stratreduction}

In  this subsection, we show that Theorem \ref{t:harm_min_regularity} follows from the rigidity and $\epsilon$-regularity theorems,
together with Theorem \ref{t:harm_quant_strat},
the quantitative stratification theorem.  The remaining two sections of Part I  will
 be devoted to proving Theorem \ref{t:harm_quant_strat}.

\begin{proof}[Proof of Theorem \ref{t:harm_min_regularity}]
Since the proofs of the first and second statements of the theorem are essentially identical, we focus on the first statement.
 As with the other main results of this paper, we will restrict attention
to the case $M^n= \dR^n$; the general case is the same up to some additional lower order errors.

It follows immediately from  Theorem \ref{t:harm_eps_regularity}  that if $\eta\leq \eta(n,N^m,\Lambda)$ with $x\in \cS^{n-3}_{\eta,r}(f)$, then $x\in \cB_{r}(f)$. In particular, for all $\eta$ sufficiently small, we have
$$
T_r(\cB_r(f))\subseteq T_r(\cS^{n-3}_{\eta,r})\, .
$$
Hence,
\begin{align}
\Vol(T_r(\cB_r(f))\cap B_1(x))\leq \Vol(T_r(\cS^{n-3}_{\eta,r}))\leq C(n,N^m,\Lambda,\eta)r^{3-\eta}\, ,
\end{align}
as claimed.
\end{proof}

\section{Reduction of Theorem \ref{t:harm_quant_strat} to the Covering Lemma}
\label{s:harm_decomposition}

Although the title of this section refers to the Covering Lemma, an equally important
role is played by the Decomposition Lemma, Lemma \ref{l:harm_decomposition}.

In outline we proceed as follows.  We begin by stating the Decomposition Lemma. This lemma
has two items in its statement.  Using this Lemma we observe that Theorem \ref{t:harm_quant_strat} is virtually an immediate consequence.  Next, we prove  item 1. of the Decomposition Lemma.  In particular,  this involves a
quantitative differentiation argument in the sense of \cite{CKN}, \cite{cheegernaber_quantstrateistein}, \cite{cheegerqdg}.  Finally, we state the Covering Lemma, Lemma \ref{l:harm_splitting}, and observe that item 2. of the Decomposition Lemma is a simple consequence of the Covering Lemma.
(For further explanation of the relationship between the Covering Lemma
and the Decomposition Lemma, see Remark \ref{significance}.)
 The proof of the Covering Lemma
is given in Section \ref{s:harm_cone_splitting}.

\subsection{The Decomposition Lemma and the Proof of Theorem \ref{t:harm_quant_strat}}
Roughly speaking, the Decomposition Lemma states that
set $\cS^k_{\eta,\gamma^j}(f)$ can be covered by a collection of {\it nonempty} sets,
$\{\cC^k_{\eta,\gamma^j}\}$, each of which consists of a not too large
collection of balls of radius $\gamma^j$.  The sets $\{\cC^k_{\eta,\gamma^j}\}$ themselves are formed by decomposing $\cS^k_{\eta,\gamma^j}(f)$ based on the behavior of points at various scales, see (\ref{e:harm_HL}) and (\ref{Edef}).  The cardinality of the collection
$\{\cC^k_{\eta,\gamma^j}\}$  goes to infinity as $j\to \infty$. However,
 according to Lemma \ref{l:harm_decomposition}, the growth rate of the number of
 sets in $\{\cC^k_{\eta,\gamma^j}\}$ is bounded by $\leq j^{K(n,\eta,\Lambda)}$.
This turns out to be  slow enough to be negligible our purposes.

\begin{lemma}[Decomposition Lemma]
\label{l:harm_decomposition}
There exists $c_1(n), c_0(n),\,K(n,\Lambda,\eta,\gamma,N),\, Q(n,\Lambda,\eta,\gamma,N)>0$ such that
for each $j\in\dZ_+$:

\begin{enumerate}
\item The set $\cS^k_{\eta,\gamma^j}(f)\cap B_1(0)$ is contained in the union of at most
$j^K$ nonempty sets $\cC^k_{\eta,\gamma^j}$.
\item Each set $\cC^k_{\eta,\gamma^j}$  is the union of at most
$(c_1\gamma^{-n})^{Q}\cdot (c_0\gamma^{-k})^{j-Q}$ balls of radius $\gamma^j$.
\end{enumerate}
\end{lemma}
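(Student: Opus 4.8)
The plan is to prove the Decomposition Lemma by induction on $j$, building the family $\{\cC^k_{\eta,\gamma^j}\}$ scale-by-scale by recording, for each point, the sequence of scales $\gamma^i$ ($i \leq j$) at which $f$ fails versus ``almost'' succeeds in being $(k+1)$-homogeneous. The key mechanism is a \emph{quantitative differentiation} argument: because $\theta_r(x)$ is monotone in $r$ (by the monotonicity formula (\ref{thetamonotone})) and bounded above by a constant $\Lambda' = \Lambda'(n,\Lambda)$ (via the energy bound (\ref{deb}) and a standard covering/comparison), for each $x$ the number of scales $\gamma^i \in (\gamma^j,1]$ at which $\theta_{\gamma^{i}}(x) - \theta_{\gamma^{i+1}}(x) > \epsilon$ is at most $N_0 = N_0(n,\Lambda,\epsilon)$, \emph{independent of $j$}. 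Combining this with the quantitative rigidity theorem (Theorem \ref{t:harm_monotone_rigidity}): at every scale $\gamma^i$ where the energy drop is $\leq \epsilon$, the map $f$ is $(\eta',\gamma^i,0)$-homogeneous, hence a fortiori fails to certify membership of $x$ in $\cS^k_{\eta,\gamma^i}$ only through scales with large energy drop. So the ``bad-scale set'' $E(x) \subseteq \{0,1,\dots,j\}$ of scales where real branching can occur has cardinality $\leq N_0$.

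Concretely, I would define $\cC^k_{\eta,\gamma^j}$ indexed by the possible bad-scale sets $E \subseteq \{0,\dots,j\}$ with $|E| \leq N_0$; since each such subset is determined by at most $N_0$ choices out of $j+1$ scales, the number of such index sets is $\leq (j+1)^{N_0} \leq j^{K}$ for suitable $K = K(n,\Lambda,\eta,\gamma,N)$ — this gives item 1. For item 2, I would show inductively that the subset $\cC^k_{\eta,\gamma^j}$ attached to a fixed $E$ is covered by $(c_1\gamma^{-n})^{Q}(c_0\gamma^{-k})^{j-Q}$ balls of radius $\gamma^j$, where $Q = N_0$. At a scale $i \in E$ (a ``bad'' scale) we simply refine each ball of radius $\gamma^{i}$ by the at most $c_1\gamma^{-n}$ balls of radius $\gamma^{i+1}$ needed to cover it — the crude volume bound. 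At a scale $i \notin E$ (a ``good'' scale), $f$ is $(\eta',\gamma^i,0)$-homogeneous near the relevant points, so there is a defining $0$-plane... more precisely, here is where the \emph{Covering Lemma} enters: being almost $(k+1)$-homogeneous at scale $\gamma^i$ forces the relevant portion of $\cS^k_{\eta,\gamma^i}$ to lie in a tubular neighborhood of a $k$-dimensional affine subspace, so it can be covered by only $\leq c_0\gamma^{-k}$ balls of radius $\gamma^{i+1}$ (that is exactly the content of item 2 of the Decomposition Lemma being deduced from the Covering Lemma, as the section outline states). Multiplying the refinement factors over the $j$ scales — factor $c_1\gamma^{-n}$ at the $\leq Q$ bad scales, factor $c_0\gamma^{-k}$ at the remaining $j - Q$ good scales — yields the stated bound.

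Finally, to see that Theorem \ref{t:harm_quant_strat} follows: the number of balls of radius $r = \gamma^j$ covering $\cS^k_{\eta,r}(f) \cap B_1(0)$ is at most $j^K \cdot (c_1\gamma^{-n})^Q (c_0\gamma^{-k})^{j-Q}$, and since $\gamma^{-k(j-Q)} = r^{-k}\gamma^{kQ}$ up to constants, and $j^K = (\log_\gamma r)^K = O(r^{-\eta'})$ for any $\eta' > 0$ (the polynomial-in-$j$ growth being absorbed into an arbitrarily small power of $r$), the covering number is $\leq C r^{-k-\eta}$; converting a Minkowski $r$-content bound into a volume bound for $T_r(\cdot)$ uses only the Bishop--Gromov-type volume comparison available under (\ref{con:sectional}).

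The main obstacle, and the step I expect to require the real work, is establishing the Covering Lemma itself — i.e. extracting genuine $k$-dimensional structure (effective concentration near a $k$-plane) from the hypothesis that $f$ is \emph{almost} $(k+1)$-homogeneous at a definite scale at \emph{every} point of the set in question. This is where the Cone-splitting Lemma is needed: one must show that if $f$ is almost homogeneous with respect to many independent directions at nearby points, those near-symmetries combine to produce an almost-$(k+1)$-homogeneous structure with a well-defined defining $k$-plane, and that a point failing to be $(k+1)$-homogeneous cannot lie in the span-direction of this plane. Making the ``combine the symmetries'' step quantitative — controlling how the small parameters degrade under this cone-splitting and over the induction — is the technical heart; by contrast, the quantitative-differentiation counting above is, given the monotonicity formula and Theorem \ref{t:harm_monotone_rigidity}, essentially a bookkeeping argument.
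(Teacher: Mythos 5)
Your proposal is correct and tracks the paper's own argument closely: the paper likewise labels each point by a $j$-tuple $T^j(x)$ recording at which scales $\gamma^i$ the map is far from versus close to $0$-homogeneous, uses the monotonicity of $\theta_r$ together with the quantitative rigidity theorem to bound the number of ``bad'' entries by a constant $K$ (hence the number of nonempty classes $\cC^k_{\eta,\gamma^j}$ by $\binom{j}{K}\leq j^K$), and refines inductively using the crude volume bound $c_1\gamma^{-n}$ at bad scales and the Covering Lemma (which in turn rests on the Cone-splitting Lemma) at good scales. One slip worth flagging: your opening describes the good/bad dichotomy in terms of almost-$(k+1)$-homogeneity, but it must be (and later in your sketch correctly is) phrased in terms of almost-$0$-homogeneity, since that is precisely what the energy-drop $\theta_1-\theta_r$ and Theorem \ref{t:harm_monotone_rigidity} control; the Covering Lemma then combines this almost-$0$-homogeneity at nearby points with the \emph{failure} of $(k+1)$-homogeneity (membership in $\cS^k_{\eta,\cdot}$) to force concentration near a $k$-plane, exactly as you explain in your final paragraph.
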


Next we show that   Lemma \ref{l:harm_decomposition}  implies Theorem \ref{t:harm_quant_strat}.
\begin{proof}[Proof of Theorem \ref{t:harm_quant_strat}]
Clearly, it suffices to verify Theorem \ref{t:harm_quant_strat} for $r$ of the form
$\gamma^j$ for some convenient choice $\gamma=\gamma(n,\eta)<1$.
Given Lemma \ref{l:harm_decomposition},
an appropriate choice is
$$
\gamma=c_0^{-\frac{2}{\eta}}\, .
$$

The volume of a ball satisfies
\begin{equation}
\label{e:harm_volume}
\Vol(B_{\gamma^j}(x))= w_n\gamma^{jn}\, ,
\end{equation}
which together with
$$
c_0^j\leq (\gamma^j)^{-\frac{\eta}{2}}\, ,
$$
$$
j^K\leq c(n,\Lambda)(\gamma^j)^{-\frac{\eta}{2}}\, ,
$$
 gives
\begin{equation}
\label{e:main}
\begin{aligned}
\Vol(\cS^k_{\eta,\gamma^j}\cap B_1(0))
&\leq j^K\cdot \left[  (c_1\gamma^{-n})^{Q} \cdot  (c_0\gamma^{-k})^{j-Q} \right]
 \cdot w_n\cdot(\gamma^j)^n\\
&\leq c(n,K,Q) \cdot j^K\cdot c_0^{j}\cdot(\gamma^j)^{n-k}\\
&\leq c(n,K,Q)\cdot(\gamma^j)^{n-k-\eta}\, .
\end{aligned}
\end{equation}

 From the above, for all $r\leq1$, we get
$$
 \begin{aligned}
 \Vol(\cS^k_{\eta,r}\cap B_1(\underline{x}))
&\leq \gamma\cdot c(n,K,Q)\cdot r^{n-k-\eta}\\
& \leq c(n,\eta,\Lambda,N) r^{n-k-\eta}\, .
\end{aligned}
$$
Therefore, modulo the proof of  Lemma \ref{l:harm_decomposition},
the proof Theorem \ref{t:harm_quant_strat} is complete.
\end{proof}
\subsection{Construction of the decomposition}
We begin with the definition of the sets in the collection $\{\cC^k_{\eta,\gamma^j}\}$.  To this end, we introduce
a quantity $\cN_t(f,B_r(x))\geq 0$, the
"$t$-nonhomogeneity" of a ball $B_r(x)$. This quantity measures how far $f$ is
from being \hbox{$0$-homogeneous} on $B_r(x)$.

\begin{definition}
Let $x\in B_1(0)$ with $0<r<1$ and $t\geq 1$.  Then we define {\it $t$-nonhomogeniety}
$\cN_t(f,B_r(x))$
 as the infimum of $\psi>0$ such that $f$ is $(\psi,tr,0)$-homogeneous.
\end{definition}

Given $\epsilon>0$,  we can break up $B_1(0)$ into the following subsets.
\begin{equation}
\label{e:harm_HL}
\begin{aligned}
H_{t,r,\epsilon}(f) & =\{x\in B_1(0)\, |\,   \cN_t(f,B_r(x))\geq \epsilon\}\, , \\
L_{t,r,\epsilon}(f) &=\{x\in B_1(0)\, |\,   \cN_t(f,B_r(x))< \epsilon\}\,  .
\end{aligned}
\end{equation}

The construction which follows makes sense for arbitrary $\epsilon>0$. Note however,
that the statement of the Decomposition Lemma does not involve
a choice of $\epsilon>0$. Therefore,  we now fix
\begin{equation}
\label{efix}
\epsilon=\epsilon(n,\eta,\gamma)\, ,
\end{equation}
where $\epsilon(n,\eta,\gamma)$ is as in the Covering Lemma,
Lemma \ref{l:harm_splitting} .

To each point $x$, we associate a $j$-tuple $T^j(x)$ as follows. By definition, for all $i\leq j$,
the $i$-th entry of $T^j(x)$ is $1$ if $x\in H_{\gamma^{-n},\gamma^i,\epsilon}$
 and $0$ if $x\in L_{\gamma^{-n},\gamma^i,\epsilon}$.  Then, for each $j$-tuple $T^j$, we  put
\begin{equation}
\label{Edef}
E_{T^j}=\{x\in B_1(0^n)\, |\, T^j(x)=T^j \}\, .
\end{equation}

 Let $\cC^k_{\eta,\gamma^0}(T^0)\equiv B_1(0)$ and by definition let $T^{j-1}$ be the $j-1$-tuple
obtained from $T^j$ by dropping the last entry.  Assume that the
nonempty subset $\cC^k_{\eta,\gamma^{j-1}}(T^{j-1})$ has been
defined and satisfies item 2. of  Lemma \ref{l:harm_decomposition}.
Assume in addition, that
$\cC^k_{\eta,\gamma^{j-1}}(T^{j-1})\supset \cS^k_{\eta,\gamma^j}\cap  E_{T^j}$.
\vskip2mm

\noindent
{\bf Induction step.}
 For each ball $B_{\gamma^{j-1}}(x)$ of $\cC^k_{\eta,\gamma^{j-1}}(T^{j-1})$,
take a minimal covering of $B_{\gamma^{j-1}}(x)\cap \cS^k_{\eta,\gamma^j}\cap  E_{T^j}$
by balls of radius $\gamma^j$ with centers in
$B_{\gamma^{j-1}}(x)\cap \cS^k_{\eta,\gamma^j}\cap  E_{T^j}$.
Define the union of all balls so obtained to be
$\cC^k_{\eta,\gamma^j}(T^{j})$, {\it provided it is nonempty}.
\vskip2mm

\subsection{Proof of item 1. of the Decomposition Lemma}
\label{hdl}

A priori, because the sets $\cC^k_{\eta,\gamma^j}(T^{j})$ are indexed by $j$-tuples with values $0$ or $1$, there are $2^j$ nonempty such subsets.  However, as we will show below, there exists $K=K(n,\Lambda,\eta,\gamma,N)$ such that
\begin{align}
\label{e:harm_badscales}
E_{T^j}\neq \emptyset\,\, {\rm implies}\,\, |T^j|\leq K(n,\Lambda,\eta,\gamma,N)\, .
\end{align}
  Since the number of $T^j$ with $|T^j|< K$
is at most
\begin{equation}
\label{e:harm_possibilities}
 {j\choose K} \leq j^{K}\,  ,
\end{equation}
it will follow that the cardinality of $\{\cC^k_{\eta,\gamma^j}(T^j)\}$ is at most $ j^K$.
 Thus the  item 1. of the Decomposition Lemma follows from (\ref{e:harm_badscales}).

Next, we verify (\ref{e:harm_badscales}).
Let the notation be as in Section \ref{s:harm_monotonicity}. For $r>0$,  we consider the
 normalized Dirichlet energy $\theta_r(x)$, defined in (\ref{thetadef}).  Recall that
by (\ref{thetamonotone}), $\theta_r(x)$
is a nonincreasing function of $r$.  Moreover,  $\theta_s(x)=\theta_r(x)$ if and only if
 $f$ is radially constant on the annulus $A_{s,r}(x)$.

For $s<t$, define the  {\it $(s,t)$-Dirichlet energy} $\cW_{s,t}(x)$ by
$$
\cW_{s,t}(x):= \theta_t(x)-\theta_s(x)\geq 0\, .
$$
Note that if $t_1\leq s_2$ then
\begin{equation}
\label{e:additivity}
\cW_{s_1,t_2}(x)\leq \cW_{s_1,t_1}(x)+\cW_{s_2,t_2}(x)\, ,
\end{equation}
with equality if $t_1=s_2$.

Let $(s_i,t_i)$ denote a possibly infinite sequence of intervals with $t_{i+1}\leq s_i$ and $t_1=1$.  Using the assumed $\Lambda$-bound on the full Dirichlet energy, we can write
\begin{equation}
\label{e:eb}
\cW_{t_1,s_1} + \cW_{t_2,s_2}+\cdots \leq \Lambda \, .
\end{equation}
where the terms on the left-hand side are all nonnegative.

Now fix $\delta>0$ and let $N$ denote the number of $i$ such that
$$
\cW_{\gamma^i,\gamma^{i-n}}>\delta\, .
$$
Then
\begin{equation}
\label{e:Nb}
N\leq (n+1)\cdot\delta^{-1}\cdot \Lambda\, .
\end{equation}
Otherwise, there would be at least $\delta^{-1}\cdot \Lambda$ {\it disjoint} closed intervals of the form $[\gamma^i,\gamma^{i-n}]$ with $\cW_{\gamma^{i}, \gamma^{i-n}}>\delta$, contradicting (\ref{e:eb}).

Lemma \ref{t:harm_monotone_rigidity} implies the existence of $\delta=\delta (n,\Lambda,\epsilon)$ such that if $\cW_{\gamma^{i}, \gamma^{i-n}}\leq \delta$ then $\cN_{\gamma^{-n}}(B_{\gamma^i}(x))\leq \epsilon\gamma^i$, i.e. $x\in L_{\gamma^{-n},\gamma^i,\epsilon}$.  Since $\epsilon=\epsilon(n,\eta,\gamma)$ has
been fixed as in (\ref{efix}), this gives (\ref{e:harm_possibilities}), i.e.
$|T^j|<N\leq K(n,\eta,\Lambda,\gamma,N)$ if $E_{T^j}\neq \emptyset$.

As as noted just after  (\ref{e:harm_possibilities}), this implies item 1. of the Decomposition Lemma \ref{l:harm_decomposition}.

\begin{remark}
Clearly, (\ref{e:harm_badscales}) is the quantitative version of the fact that tangent maps are $0$-homogeneous.
The argument we have given is an instance of {\it quantitative differentiation} in the sense of Section 14 of \cite{CKN}.
\end{remark}

\subsection{Reduction of  item 2.  of the Decomposition Lemma to the Covering Lemma}

Recall that the sets $\cC^k_{\eta,\gamma^{j-1}}\cap E_{T^j}$ are constructed
inductively, using minimal coverings of balls of radius
$\gamma^{j-1}$ by balls of radius $\gamma^j$; see the {\bf Induction step} just prior to (\ref{e:harm_badscales}).
Also note that in view of the doubling condition on the riemannian measure,
 any ball $B_{\gamma^{j-1}}(x)$ can be covered by
 at most
$c_1(n)\gamma^{-n}$ balls of radius $\gamma^j$.  However, when $j>n$ and the $j$-th entry of $T^j$
is $0$, we use instead the following lemma, whose proof will be given
in Section \ref{s:harm_cone_splitting}.

\begin{lemma}[Covering lemma]
\label{l:harm_splitting}
There exists $\epsilon=\epsilon(n,\eta,\gamma)$, such that if
$\cN_{\gamma^{-n}}(B_{\gamma^{j-1}}(x))\leq \epsilon$ and
$B_{\gamma^{j-1}}(x)$ is a ball of $\cC^k_{\eta,\gamma^{j-1}}(T^{j-1})$,
 then the number of balls in the minimal covering of
$B_{\gamma^{j-1}}(x) \cap \cS^k_{\eta,\gamma^j}\cap L_{\gamma^{-n},\gamma^j,\epsilon}$
 is $\leq c_0(n)\gamma^{-k}$.
\end{lemma}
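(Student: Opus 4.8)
The plan is to exploit the hypothesis that $f$ is nearly $0$-homogeneous on $B_{\gamma^{j-1}}(x)$ (the condition $\cN_{\gamma^{-n}}(B_{\gamma^{j-1}}(x))\leq\epsilon$) together with the fact that every point of the set to be covered lies in $\cS^k_{\eta,\gamma^j}$, in order to force the set being covered to cluster near a $k$-dimensional subset; a $k$-dimensional subset of a ball of radius $\gamma^{j-1}$ is covered by $\leq c_0(n)\gamma^{-k}$ balls of radius $\gamma^j$. The mechanism is cone-splitting: if $f$ is almost $0$-homogeneous about $x$ and also almost $0$-homogeneous about a second, far-away point, then $f$ is almost $1$-homogeneous (translation-invariant along the line through the two points); iterating, almost-$0$-homogeneity about $k+1$ points in "general position" yields almost $(k+1)$-homogeneity. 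Since no point $y$ of $\cS^k_{\eta,\gamma^j}$ can have $f$ be $(\eta,s,k+1)$-homogeneous for any $s\in[\gamma^j,1]$, such a configuration of $k+1$ points in general position inside the covered set is impossible — hence the covered set is within $\gamma^j$ (after rescaling, a definite amount) of some $k$-plane, which gives the covering bound.

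The steps, in order, would be: (1) Rescale $B_{\gamma^{j-1}}(x)$ to unit size; then $\cN_{\gamma^{-n}}(B_{\gamma^{j-1}}(x))\le\epsilon$ says $f$ is $(\epsilon,\gamma^{-n},0)$-homogeneous, i.e.\ $\epsilon$-close in $L^2$ on a ball of radius $\gamma^{-n}$ to a $0$-homogeneous map $h_0$ with vertex at $x$. (2) State and apply the quantitative cone-splitting estimate: if $f$ is $(\epsilon,\rho,0)$-homogeneous about each of two points $p,q$ with $|p-q|$ and $\rho$ of definite size, then $f$ is $(\Psi(\epsilon|n,N,\Lambda),1,1)$-homogeneous with defining line $\overline{pq}$, where $\Psi\to0$ as $\epsilon\to0$; this is the lemma proved in Section \ref{s:harm_cone_splitting}, and I would invoke the monotonicity formula (\ref{thetamonotone}) and the quantitative rigidity Theorem \ref{t:harm_monotone_rigidity} to establish it — radial-constancy about two distinct points propagates to translation-invariance along their join. (3) Iterate: suppose the set $S:=B_{\gamma^{j-1}}(x)\cap\cS^k_{\eta,\gamma^j}\cap L_{\gamma^{-n},\gamma^j,\epsilon}$ (rescaled) is \emph{not} contained in the $\tau$-neighborhood of any $k$-plane, for a suitable small $\tau=\tau(n,\gamma)$. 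Then one can inductively pick $p_0,\dots,p_{k}\in S$ that are "$\tau$-independent" (each $p_{i+1}$ at distance $\geq\tau$ from the affine span of $p_0,\dots,p_i$). Each $p_i\in L_{\gamma^{-n},\gamma^j,\epsilon}$, i.e.\ $f$ is $(\epsilon,\gamma^{-n}\cdot\gamma^{j}\cdot(\text{rescale}),0)$-homogeneous about $p_i$ at the relevant scale; applying cone-splitting $k+1$ times yields that $f$ is $(\Psi_k(\epsilon),1,k+1)$-homogeneous about a point near $S$ at a definite scale $s\in[\gamma^j,1]$. (4) Choose $\epsilon=\epsilon(n,\eta,\gamma)$ so small that $\Psi_k(\epsilon)<\eta$; then any $y\in S$ near that configuration violates $y\in\cS^k_{\eta,\gamma^j}(f)$ — contradiction. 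Hence $S$ lies in the $\tau$-neighborhood of a $k$-plane, and a minimal covering by $\gamma^j$-balls centered in $S$ uses at most $c_0(n)\gamma^{-k}$ of them.

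The main obstacle is bookkeeping the scales through the cone-splitting iteration: each application is about a point $p_i$ and requires almost-$0$-homogeneity on a ball whose radius must remain comparable to $1$ after rescaling, while simultaneously the pairwise distances $|p_i-p_j|$ must stay bounded below by a definite $\tau$. One must check that $k+1\le n$ applications only degrade the "almost" parameter from $\epsilon$ to $\Psi_k(\epsilon)$ with $\Psi_k$ still a modulus of continuity vanishing at $0$ depending only on $n,N^m,\Lambda$ (and not on $j$), and that the final near-homogeneity holds on a scale $s\in[\gamma^j,1]$ so the definition of $\cS^k_{\eta,\gamma^j}$ applies. The factor $\gamma^{-n}$ built into $\cN_{\gamma^{-n}}$ is exactly the room needed: it guarantees that after rescaling by $\gamma^{j-1}$ the homogeneity holds on a ball large enough to still contain the configuration and a unit ball around each $p_i$ after a further rescaling by the intra-ball factor. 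Once the scale bookkeeping is set up, choosing $\tau$ and then $\epsilon$ in the right order ($\tau$ fixed first depending on $n,\gamma$; then $\epsilon$ depending on $n,\eta,\gamma$ via $\Psi_k$) closes the argument.
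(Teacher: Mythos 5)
Your proposal follows essentially the same route as the paper: the paper proves the Covering Lemma by invoking Corollary~\ref{c:harm_inductive} (which packages the iterated Cone-splitting Lemma \ref{l:harm_cone_splitting}) to conclude that the good points $L_{\gamma^{-n},\gamma^j,\epsilon}\cap\cS^k_{\eta,\gamma^j}\cap B_{\gamma^{j-1}}(x)$ lie in $T_{\tau\gamma^{j-1}}(V^\ell_{f,x})$ for some $\ell\le k$, and you re-derive the same containment directly by a greedy selection of $\tau$-independent points and an iterated cone-splitting contradiction; the roles of $\cN_{\gamma^{-n}}$, the scale factor $\gamma^{-n}$, and the quantitative rigidity input are all identified correctly. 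One small slip: with $p_0,\dots,p_k$ ($k+1$ points) that are $\tau$-independent, $k$ applications of cone-splitting give only $k$-homogeneity at $p_0$, which does \emph{not} contradict $p_0\in\cS^k_{\eta,\gamma^j}$; you need a $(k+2)$-nd point $p_{k+1}\in S$ at distance $\ge\tau$ from the $k$-plane spanned by $p_0,\dots,p_k$ (which exists precisely because you assumed $S\not\subset T_\tau$ of any $k$-plane), and then $k+1$ applications yield $(k+1)$-homogeneity and the contradiction. With that correction the argument closes exactly as in the paper.
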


Assuming Lemma \ref{l:harm_splitting}, an obvious induction argument yields
the bound on the number of balls of $\cC^k_{\eta,\gamma^j}$ appearing in item 2.
of Lemma \ref{l:harm_decomposition}.

\begin{remark}
\label{significance}
Note that the above induction argument relies crucially on the fact that we can
deal separately with each  individual set, $\cC^k_{\eta,\gamma^j}$,
all of whose points have the sequence of good and bad scales,
and then sum over the collection of all such subsets.
Absent the decomposition into such subsets, it is not at all clear
how such an induction argument could be carried out.
\end{remark}

\begin{remark}
Lemma \ref{l:harm_splitting} can be viewed as the quantitative analog of the density
argument in the proof that $\dim \cS^k(f)\leq k$; see (\ref{de}).
\end{remark}

The proof of Lemma \ref{l:harm_splitting}
will be a direct consequence of Corollary \ref{c:harm_inductive} of the Cone-splitting
Lemma, Lemma \ref{l:harm_cone_splitting}.

\section{Proof of the Covering Lemma via the Cone-splitting Lemma}
\label{s:harm_cone_splitting}

A cone-splitting principle for tangent cones at points of limit spaces with a  definite lower Ricci
curvature bound,
together with its quantitative
refinement, figured prominently  in \cite{cheegernaber_quantstrateistein}. An analogous cone-splitting principle for harmonic
maps  plays a key role here.
\vskip2mm

\noindent
{\bf Cone-splitting principle of} \cite{cheegernaber_quantstrateistein}{\bf .}
Let $C(X), \, C(Y)$ denote {\it metric cones} and assume that the diameter of $X$ is at most $\pi$.
Let $x^*,\, y^*$ denote vertices of $C(X), C(Y)$.
If there exists an isometry $h:C(Y)\to\dR^k\times C(X)$   for which
 $h(y^*)\not\in \dR^k\times C(X)$,
then $\dR^k\times C(X)$ is isometric to some cone $\dR^{k+1}\times C(Z)$.
\vskip1mm

\noindent
{\bf Cone-splitting principle for harmonic maps.}
 If $h:\dR^n\rightarrow N^m$ is $k$-homogeneous at $y$ with respect to the $k$-plane $V^k$,
$h$ is $0$-homogeneous at $z\in\dR^n$, and $z\not\in y+V$, then
$h$ is $(k+1)$-homogeneous at $y$ with respect to the $(k+1)$-plane ${\rm span}\{z-y,V^k\}$.
\vskip2mm

 From the above and a contradiction argument as in Theorem \ref{t:harm_monotone_rigidity}, we immediately obtain the following quantitative refinement.

\begin{lemma}[Cone-Splitting Lemma]
\label{l:harm_cone_splitting}
There exists $\delta=\delta(n,N^m,\Lambda,\epsilon,\tau,\gamma)>0$ with the following property.
Let $\epsilon,\tau,\gamma>0$ and let
$f:B_{\gamma^{-1}}(0^n)\rightarrow N$ denote a stationary harmonic map with bounded Dirichlet energy,
$$
\int_{B_{\gamma^{-1}}(0^n)}|\nabla f|^2<\Lambda\, .
$$
Assume in addition:
\begin{enumerate}
 \item $f$ is $(\delta,\gamma^{-1},k)$-homogeneous at $0^n$.
 \item There exists $y\in B_1(0^n)\setminus T_{\tau}(V^k_{f,0^n})$ such that $f$ is $(\delta,2,0)$-homogeneous at $y$.
\end{enumerate}
Then $f$ is $(\epsilon,1,k+1)$-homogeneous at $0^n$.
\end{lemma}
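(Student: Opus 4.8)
The plan is to prove Lemma \ref{l:harm_cone_splitting} by a standard compactness/contradiction argument, reducing to the qualitative \textbf{Cone-splitting principle for harmonic maps} stated just above. First I would fix $\epsilon,\tau,\gamma>0$ and suppose no such $\delta$ exists. Then there is a sequence $\delta_i\to 0$ and stationary harmonic maps $f_i:B_{\gamma^{-1}}(0^n)\to N^m$ with $\int_{B_{\gamma^{-1}}(0^n)}|\nabla f_i|^2<\Lambda$ satisfying hypotheses (1) and (2) with $\delta$ replaced by $\delta_i$, but such that $f_i$ is \emph{not} $(\epsilon,1,k+1)$-homogeneous at $0^n$. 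By the energy bound and the monotonicity formula (\ref{thetamonotone}), after passing to a subsequence we obtain $f_i\to f$ weakly in $H^1$ and strongly in $L^2$ on, say, $B_1(0^n)$, with $f$ stationary harmonic (strong $L^2$ convergence of the $T_{\cdot,r}f_i$ on unit balls is what the almost-homogeneity conditions are phrased in terms of).

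Next I would extract the limiting symmetries. From hypothesis (1) applied with $\delta_i\to 0$: there are $k$-homogeneous maps $h_i:\dR^n\to N^m$ (homogeneous with respect to $k$-planes $V^k_i$) with $\fint_{B_1(0^n)}\dist(T_{0^n,\gamma^{-1}}f_i,h_i)^2<\delta_i$. Passing to a subsequence, $V^k_i\to V^k$ and (using the energy bound to control $h_i$) $h_i\to h$ strongly in $L^2_{loc}$, so $T_{0^n,\gamma^{-1}}f=h$ is $k$-homogeneous with respect to $V^k$; rescaling, $f$ is $k$-homogeneous at $0^n$ with respect to $V^k$, i.e. $0$-homogeneous in the radial direction and translation-invariant along $V^k$. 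Similarly, from hypothesis (2): the points $y_i\in B_1(0^n)\setminus T_\tau(V^k_{f_i,0^n})$ converge (subsequence) to some $y\in \overline{B_1(0^n)}$, and since $y_i$ stays $\tau$-away from the (converging) $k$-planes, the limit satisfies $y\notin 0^n+V^k$; and $(\delta_i,2,0)$-homogeneity at $y_i$ passes to the limit to give that $f$ is $0$-homogeneous at $y$. Now the qualitative Cone-splitting principle for harmonic maps applies to $f$ (viewed as a $k$-homogeneous map $\dR^n\to N^m$): it is $(k+1)$-homogeneous at $0^n$ with respect to $\mathrm{span}\{y,V^k\}$. Finally, since $f_i\to f$ strongly in $L^2$ on $B_1(0^n)$ and $f$ is exactly $(k+1)$-homogeneous, for $i$ large $f_i$ is $(\epsilon,1,k+1)$-homogeneous at $0^n$ — contradicting the choice of the $f_i$.

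The main technical point, and the one I'd be most careful about, is the \emph{bookkeeping of scales and base points} so that all three pieces of data — the limiting map $f$, the limiting $k$-plane $V^k$, and the limiting vertex $y$ — live on compatible domains and the strong $L^2$ convergence is available precisely where each almost-homogeneity statement needs it. Hypothesis (1) is stated at scale $\gamma^{-1}$ (the full domain), hypothesis (2) at scale $2$ around a moving point $y_i$, and the conclusion at scale $1$; one must check that convergence $f_i\to f$ holds strongly in $L^2$ on a neighborhood large enough to see all of these, which is fine because the energy bound on $B_{\gamma^{-1}}(0^n)$ and Rellich give strong $L^2$ compactness on any fixed compactly contained ball (for $\gamma<1$ this includes $B_2(y_i)$ once $y_i\in B_1$). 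A second small point is that weak $H^1$ limits of stationary maps are again stationary with no energy increase, and that the monotonicity formula passes to the limit — both standard, and exactly the ingredients already invoked in the proof of Theorem \ref{t:harm_monotone_rigidity}. Beyond this scale-bookkeeping there is no real obstacle: the argument is the direct quantitative promotion of the cone-splitting principle, entirely parallel to the contradiction argument used for the quantitative rigidity theorem.
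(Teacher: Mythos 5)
Your proof is correct and follows the same approach the paper indicates: the paper states that the Cone-Splitting Lemma follows ``from the above [cone-splitting principle] and a contradiction argument as in Theorem \ref{t:harm_monotone_rigidity},'' and you execute precisely that compactness argument, passing to limits in the $k$-plane, the vertex $y_i$, and the maps $f_i$, and then invoking the qualitative principle on the limit. One immaterial imprecision: weak $H^1$ limits of stationary harmonic maps need not themselves be stationary, but your argument never actually uses stationarity of the limit --- only the strong $L^2$ convergence from Rellich and the purely geometric cone-splitting principle --- so this does not affect the proof.
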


The import of Lemma \ref{l:harm_cone_splitting} can be paraphrased as stating:
When points are well behaved in the sense of (\ref{e:harm_HL}) and lie near one another, then they interact and force a surrounding neighborhood to inherit even better properties.

The notation of the next corollary is as in Section \ref{s:harm_decomposition}.

\begin{corollary}
\label{c:harm_inductive}
For all $\epsilon,\tau,\gamma >0$ there exists $\delta(n,N^m,\Lambda,\epsilon,\tau,\gamma)>0$ and $\theta(n,N^m,\Lambda,\epsilon,\tau,\gamma)>0$ such that the following holds.
Let $f:B_{2}(0^n)\rightarrow N$ denote a stationary harmonic map with bounded Dirichlet energy
as (\ref{deb}).
 Let $r\leq \theta$ and $x\in L_{\gamma^{-n},r,\delta}(f)$. Then there exists $0\leq \ell\leq n$ such that
\begin{enumerate}
\item $f$ is $(\epsilon,r,\ell)$-homogeneous,
\item $L_{\gamma^{-n},\delta,r}\cap B_r(x)\subseteq T_{\tau r}(V^\ell_{f,x})$ .
\end{enumerate}
\end{corollary}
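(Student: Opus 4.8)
The plan is to prove this by iterating the Cone-Splitting Lemma, Lemma \ref{l:harm_cone_splitting}, at most $n$ times, tracking a nested sequence of defining planes of increasing dimension. First I would set up the iteration by choosing the parameters carefully: given $\epsilon,\tau,\gamma$, define a finite decreasing chain of auxiliary tolerances $\epsilon = \epsilon_n < \epsilon_{n-1} < \cdots < \epsilon_0$, where $\epsilon_{\ell-1}$ is the $\delta$ produced by Lemma \ref{l:harm_cone_splitting} applied with output tolerance $\epsilon_\ell$ (and with the given $\tau$ and an appropriate rescaled version of $\gamma$); then set $\delta := \epsilon_0$, and let $\theta$ be small enough that $B_{\gamma^{-1}}$-scale hypotheses of the Cone-Splitting Lemma make sense at every scale $\leq \theta$. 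After rescaling $B_r(x)$ to unit size, we may as well assume $r=1$ and $x = 0^n$, so that $x \in L_{\gamma^{-n},1,\delta}(f)$ means precisely that $f$ is $(\delta, \gamma^{-n}, 0)$-homogeneous at $0^n$.

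Next I would run the induction on the homogeneity dimension. The base case: $f$ is $(\delta,\gamma^{-n},0)$-homogeneous at $0^n$, so taking $\ell = 0$ gives item (1) trivially (with $\epsilon_0 \geq \epsilon$ — in fact we only need the final output tolerance, so the bookkeeping must be arranged so the last application lands at tolerance $\epsilon$). For the inductive step, suppose we have shown $f$ is $(\epsilon_\ell, 1, \ell)$-homogeneous with defining $\ell$-plane $V^\ell_{f,0^n}$, but that item (2) fails at this level, i.e. there exists $y \in L_{\gamma^{-n},\delta,1} \cap B_1(0^n)$ with $y \notin T_{\tau}(V^\ell_{f,0^n})$. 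Since $y \in L_{\gamma^{-n},\delta,1}$ means $f$ is $(\delta, \gamma^{-n}, 0)$-homogeneous at $y$, hence in particular $(\delta, 2, 0)$-homogeneous at $y$, the two hypotheses of Lemma \ref{l:harm_cone_splitting} are met (with $k = \ell$), and we conclude $f$ is $(\epsilon_{\ell+1}, 1, \ell+1)$-homogeneous at $0^n$. This produces a defining $(\ell+1)$-plane and we continue. The process must terminate: either item (2) holds at some stage $\ell \leq n$, or we reach $\ell = n$, at which point $V^n_{f,0^n} = B_1(0^n)$ so $T_\tau(V^n_{f,0^n}) \supseteq B_1(0^n)$ and item (2) holds vacuously. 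In either case we have the desired $\ell$ with both items, at output tolerance at most $\epsilon$.

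The main obstacle I anticipate is the parameter bookkeeping across rescalings. The Cone-Splitting Lemma is stated on the ball $B_{\gamma^{-1}}(0^n)$, whereas here we start from $L_{\gamma^{-n},r,\delta}$, i.e. with homogeneity on a ball of radius $\gamma^{-n}r$; one must check that after rescaling $B_r(x)$ to unit size, the ``$(\delta,\gamma^{-n},0)$-homogeneous'' hypothesis can be fed into Lemma \ref{l:harm_cone_splitting} (possibly after first observing $(\delta,\gamma^{-n},k)$-homogeneity restricts to $(\delta',\gamma^{-1},k)$-homogeneity for a controlled $\delta'$, using that averaged $L^2$-distance over a smaller ball is comparable up to a factor $\gamma^{-n}$ of dimension $n$). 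One also needs that the defining plane passing from step $\ell$ to step $\ell+1$ has its tubular neighborhood behaving well — but this is exactly the content of the Cone-Splitting Lemma's conclusion, which names the new defining plane $\mathrm{span}\{y, V^\ell\}$. A secondary, purely cosmetic point is that $\theta$ and $\delta$ must be chosen uniformly over the $\leq n$ iteration steps, which is fine since there are only finitely many and each invokes Lemma \ref{l:harm_cone_splitting} once; taking the minimum of the finitely many $\delta$'s and $\theta$'s suffices. Finally, one should remark that items (1) and (2) of the corollary, together with the definitions in Section \ref{s:harm_decomposition}, are exactly what the Induction step of the Decomposition Lemma needs in order to invoke the Covering Lemma, Lemma \ref{l:harm_splitting} — but that deduction is carried out elsewhere.
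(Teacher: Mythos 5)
Your argument is essentially the same as the paper's: iterate Lemma \ref{l:harm_cone_splitting} through a chain of nested tolerances and note that, since each cone-split raises the homogeneity dimension by one, the iteration stops after at most $n$ steps (the paper phrases this by fixing the \emph{largest} $\ell$ for which item (1) holds and deriving item (2) by contradiction, which is logically the same as your bottom-up induction). One small slip: your stated chain $\epsilon=\epsilon_n<\epsilon_{n-1}<\cdots<\epsilon_0$ has the inequalities reversed --- since $\epsilon_{\ell-1}=\delta_0(\epsilon_\ell)<\epsilon_\ell$, the correct ordering is $\delta=\epsilon_0<\epsilon_1<\cdots<\epsilon_n=\epsilon$, which is anyway what your recurrence and your choice $\delta:=\epsilon_0$ presuppose.
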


\begin{proof}
 For $\delta_0(n,N^m,\Lambda,\epsilon,\tau,\gamma^n)$ as in Lemma \ref{l:harm_cone_splitting}, inductively define
$\delta^{[n-i]}=\delta_0\circ\delta_0\circ\cdots\circ\delta_0$
\hbox{($i$ factors in the} composition). Then $\delta^{[0]}<\delta^{[1]}<\cdots<\delta_0$ and let us put $\delta=\delta^{[0]}$.  Since by assumption, $x\in L_{\gamma^{-n},r,\delta}$,  there exists a largest $\ell\geq 0$ such that $f$ is $(\epsilon,r,\ell)$-homogeneous.  To see that the corollary holds for this value of $\ell$, apply
Lemma \ref{l:harm_cone_splitting} to the rescaled ball $B_{\gamma^{-(n-\ell-1)}r}(x)$, with $\delta$ as above.
\end{proof}

Now we can finish the proof of the covering lemma.

\begin{proof}[Proof of Lemma \ref{l:harm_splitting}]
Let $B_{\gamma^{j-1}}(x)$ be as in Lemma \ref{l:harm_splitting}.  Since by assumption, $x\in \cS^k_{\eta,\gamma^i}\cap L_{\gamma^{-n},\gamma^j,\epsilon}$ we have that $f$ is not $(\eta,\gamma^{-n}\gamma^{j-1},k+1)$-homogeneous at $x$.  In particular by applying Corollary \ref{c:harm_inductive} with $\epsilon=\frac{1}{10}\gamma$ it follows that for some $\ell\leq k$ that $L_{\gamma^{-n},\gamma^j,\epsilon}\cap \cS^k_{\eta,\gamma^j}\cap B_{\gamma^{j-1}}(x)\subseteq T_{\frac{1}{10}\gamma^j}(V^\ell_{f,x})\cap B_{\gamma^{j-1}}(x)$, from which the result follows.
\end{proof}

\part{Minimal Currents}

\section{Main results on minimal currents}

We will denote by $I^k(M^n)$ the rectifiable integral $k$-currents on a riemannian manifold $M^n$.  For $I\in I^k(M^n)$ we denote by $|I|$ the associated varifold, where it is understood that if we apply $|I|$ to a subset of $M$ then we are evaluating $|I|$ on the pullback of this subset to the Grassmannian bundle.
In particular, the total mass of $I$ will be denoted $|I|(M^n)$.
By abuse of notation,
we will occasionally  refer to a general varifold on $M^n$ as $|I|$, even if {\it a priori} it does not arise from a current.  In practice, due to the context and results like Allard's regularity theorem \cite{Allard_firstvariation}, we will only be interested in varifolds which do arise in such a fashion. Therefore,
 the notational abuse is harmless.  If $U\subseteq M^n$ is a subset, then $I^k(M^n,U)$ will denote the integral $k$-currents $I$ such that $\partial I\subseteq U$.

We will be concerned with two classes of currents:  stationary currents and  minimizing rectifiable hypersurfaces.  The primary results hold in the varifold category as well.
By definition,  $I\in I^k(M^n,U)$ is minimizing means that if $I'\in I^k(M^n,U)$ satisfies $\partial I'=\partial I$, then $|I|(M)\leq |I'|(M)$.

 For stationary currents,  we will define  certain {\it quantitative stratifications} of the singular set.  Our main result,  Theorem \ref{t:current_quant_strat}, gives Minkowski content
estimates on the quantitative singular set,
which improve the more standard Hausdorff dimension estimates.  The primary application of this
new stratification, is to give new regularity results on minimizing hypersurfaces; see Theorem \ref{t:current_min_regularity}.
 These depend on the preliminaries given in  Section \ref{s:current_monotonicity}; compare Part I.

Recall that the basic assumptions  (\ref{con:sectional}), (\ref{con:volume}), on $M^n$
remain in force.

\subsection{The standard stratification}

Prior to defining the quantitative stratification of Theorem \ref{t:current_quant_strat}, we
will introduce some basic notions. These include the standard stratification of a stationary current,
which is based the notion of a "conical current".

\begin{definition}
 Let $J\in I^k(\dR^n)$ denote a rectifiable $k$-current on $\dR^n$.  We say that
$J$ is {\it $\ell$-conical at $y\in\dR^n$ with respect to the $\ell$-plane $V\subseteq\dR^n$}, if:
\begin{enumerate}
\item $J(r^*w)=r^k J(w)$ for all $k$-forms $w$, where $r^*w$ is the pullback of $w$ under the multiplication by $r$ map.
\item $J(w(y-z))=J(w(y))$ for all $k$-forms $w$ and $z\in V$.
\end{enumerate}
If $y=0^k$, we  say $J$ is $\ell$-conical.
\end{definition}
\begin{remark}
The first requirement is simply that $J$ is invariant under dilations. The second is that $J$ is invariant under translations by elements of $V$.  We will also say a varifold $|J|$ is {\it $\ell$-conical} if it satisfies these requirements.
\end{remark}

\noindent
Note that if $J\in I^k(\dR^n)$ is a $k$-conical current with respect to $V$,
 then $J$ is a multiple of $V$ as a current.

Next, we recall the notion of a "tangent cone" of a stationary current $I\in I^k(M)$.
In order to avoid certain cancelation issues,
it will be more convenient to think of the tangent cone of $I$ at $y$ as a varifold (although for our purposes
 the distinction is not really significant).

For $y\in I$ and $0<r<1$ we define the current $I_{y,r}$ on $B_{r^{-1}}(0^n)\subseteq \dR^n$ by:

$$
I_{y,r}(w):= r^{-k}I(\exp^*_y(r w))\, .
$$
We call $|I|_y$ a {\it tangent cone of $I$ at $y$} if there exists $r_i\rightarrow 0$ such that
$$
|I|_{y,r_i}\rightarrow |I|_y\, ,
$$
where convergence is in the weak$^*$ sense for varifolds.  Note that under the weak$^*$ topology,
 the space of varifolds on $\dR^n$ is a Frechet space with distance function $d_{*}(\cdot,\cdot)$.  Tangent cones at a point need not be unique but for a stationary current they are always $0$-conical.

Now,  we define the stratification,
$$
\cS^0(I)\subseteq \cS^1(I)\subseteq\cdots\subseteq \cS^{k-1}(I)=\cS(I)\subseteq M^n\, ,
$$
where by definition,
 $y\in \cS^\ell(I)$ if and only if no tangent cone $|I|_{y}$ at $y$  is $(\ell+1)$-conical.
If $I\in I^{n-1}(M)$ is area minimizing, then by classical results (see\cite{Almgren_InteriorRegMin}, \cite{Simon_MinVar}, \cite{Federer_GeomMeas})
\begin{equation}
\label{mde}
\dim\, \cS^\ell(I)\leq \ell\, ,
\end{equation}
\begin{equation}
\label{mcde}
\cS(I)=\cS^{n-8}(I)\, .
\end{equation}

\subsection{The quantitative stratification}

To define the quantitative stratification,  we need the notion of an "almost conical current".

\begin{definition}
\label{d:current_almost_conical}
An integral current $I\in I^k(M^n)$ is $(\eta,r,\ell)$-conical
 at $y\in I$ if there exists a $\ell$-conical varifold $|J|$ such that
$$
d_{*}(|I|_{y,r},|J|)<\eta\, .
$$
If $|J|$ is canonical with respect to the plane $V^\ell$,
then $|I|_{y,r}$ is said to be {\it $(\eta,r,\ell)$-canonical} with respect to $V^\ell$.
\end{definition}

Now we can introduce the quantitative singular set.

\begin{definition}
 For each $\eta>0$ and $0<r<1$, define the $\ell$-th effective singular stratum
$\cS^\ell_{\eta,r}(I)\subseteq M^n$ by
\begin{align}
 \cS^\ell_{\eta,r}(I)=\{y\in I: d_*(|I|_{y,s},|J|)>\eta\,\,{\rm for\,\, all}
\,\ell\text{-conical } |J|\,\,
{\rm and \,\,all}\, r\leq s\leq 1\}.
\end{align}
\end{definition}

The above definition can be rephrased as stipulating that $y\in \cS^\ell_{\eta,r}(I)$ if and only if
 $|I|_{y,s}$ is not $(\eta,s,\ell+1)$-conical for all $r\leq s\leq 1$.  Moreover, it follows immediately from the definition
that

$$
\label{e:current_containments}
\begin{aligned}
\cS^{\ell}_{\eta,r}(I)&\subset \cS^{\ell'}_{\eta',r'}(I)\qquad
({\rm if}\,\,\ell'\leq \ell,\,\,\eta'\leq\eta,\,\, r\leq r')\, ,\\
&{}\\
\cS^\ell(I)&=\bigcup_\eta \,\,\bigcap_r \, \cS^\ell_{\eta,r}(I)\, .
\end{aligned}
$$

Our main theorem on the effective singular set states that the known estimates
 on the Hausdorff dimension of
the singular set, given in \cite{Almgren_InteriorRegMin}, \cite{Simon_MinVar}, \cite{Federer_GeomMeas},
 can be strengthened to Minkowski content estimates on the quantitative singular set.  Equivalently, not only the effective singular sets themselves, but also tubular neighborhoods around these sets have controlled volume.

\begin{theorem}
\label{t:current_quant_strat}
Let the mass of the stationary integral current
 $I\in I^k(B_2(x),\partial B_2(x))$ satisfy
\begin{equation}
\label{mb}
|I|(B_2(x))<\Lambda\, .
\end{equation}
 Then for all $\eta>0$, there exists $C=C(n,\Lambda,\eta)$,
such that for all $0<r<1$,
\begin{equation}
\Vol(T_r(\cS^\ell_{\eta,r}(I))\cap B_1(x))\leq Cr^{n-\ell-\eta},
\end{equation}
\begin{equation}
|I|(T_r(\cS^\ell_{\eta,r}(I))\cap B_1(x))\leq Cr^{k-\ell-\eta}.
\end{equation}
\end{theorem}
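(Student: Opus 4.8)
The plan is to mirror exactly the strategy used in Part I for harmonic maps, since the statement of Theorem \ref{t:current_quant_strat} is the precise analogue of Theorem \ref{t:harm_quant_strat} with the normalized Dirichlet energy $\theta_r(x)$ replaced by the normalized mass ratio $\Theta_r(x) := r^{-k}|I|(B_r(x))$, almost-homogeneity replaced by the $(\eta,r,\ell)$-conicality of Definition \ref{d:current_almost_conical}, and the quantitative rigidity theorem replaced by the corresponding statement that an almost-monotone mass ratio forces approximate $0$-conicality. First I would record the monotonicity formula for stationary integral currents (Allard, Almgren, Simon): $\Theta_r(x)$ is nondecreasing in $r$, with $\Theta_r(x)=\Theta_s(x)$ iff $|I|$ is a cone on the annulus $A_{s,r}(x)$, together with the compactness theorem for stationary integral currents of bounded mass under weak$^*$ convergence of the associated varifolds. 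From these two ingredients a contradiction argument identical to the proof of Theorem \ref{t:harm_monotone_rigidity} yields the quantitative rigidity statement: for every $\eta>0$ there are $\epsilon,r>0$ so that $\Theta_1(x)-\Theta_r(x)\le\epsilon$ implies $|I|$ is $(\eta,1,0)$-conical at $x$.

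Next I would reduce the theorem to a Decomposition Lemma, exactly as in Section \ref{s:harm_decomposition}. Fixing a convenient $\gamma=\gamma(n,\eta)<1$, one defines the $t$-nonconicality $\cN_t(|I|,B_r(x))$ as the infimum of $\psi$ with $|I|$ being $(\psi,tr,0)$-conical, splits $B_1(x)$ into high-nonconicality and low-nonconicality sets $H_{t,r,\epsilon}$, $L_{t,r,\epsilon}$, assigns to each point a $j$-tuple recording which scales $\gamma^i$ are "bad", and collects the points with a common tuple into sets $\cC^\ell_{\eta,\gamma^j}(T^j)$ built by the same inductive minimal-covering procedure. Item 1 of the Decomposition Lemma — that only $\le j^{K(n,\Lambda,\eta,\gamma)}$ tuples occur — follows by the quantitative differentiation argument verbatim: the total mass bound $\Lambda$ caps the number of scales at which $\cW_{\gamma^i,\gamma^{i-n}}:=\Theta_{\gamma^{i-n}}(x)-\Theta_{\gamma^i}(x)>\delta$ by $(n+1)\delta^{-1}\Lambda$, and quantitative rigidity converts a small $\cW$ into membership in $L_{\gamma^{-n},\gamma^i,\epsilon}$. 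Item 2 follows from a Covering Lemma whose proof rests on a Cone-splitting Lemma for currents: if $|J|$ is $\ell$-conical at $y$ with respect to $V^\ell$ and $0$-conical at $z\notin y+V^\ell$, then $|J|$ is $(\ell+1)$-conical at $y$ with respect to $\mathrm{span}\{z-y,V^\ell\}$; the quantitative version follows from this plus the compactness theorem and a contradiction argument. Feeding the Decomposition Lemma into the volume estimate as in \eqref{e:main} gives $\Vol(\cS^\ell_{\eta,\gamma^j}\cap B_1(x))\le C(\gamma^j)^{n-\ell-\eta}$, which upgrades to the stated tube estimate using the sectional curvature bound \eqref{con:sectional}, exactly as explained in the introduction.

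For the second, mass-theoretic inequality $|I|(T_r(\cS^\ell_{\eta,r}(I))\cap B_1(x))\le Cr^{k-\ell-\eta}$, I would run the same covering bookkeeping but estimate the mass of each radius-$\gamma^j$ ball rather than its volume. Here the monotonicity formula is what supplies the needed control: for a stationary current with $|I|(B_2(x))<\Lambda$, the mass ratio $\Theta_r(y)$ is monotone and hence bounded above on $B_1(x)$ by a constant depending only on $\Lambda$ (and $n,k$), so $|I|(B_{\gamma^j}(y))\le C(n,\Lambda)(\gamma^j)^k$ for every center $y$. Replacing $w_n(\gamma^j)^n$ by $C(\gamma^j)^k$ in the computation \eqref{e:main} then gives $|I|(\cS^\ell_{\eta,\gamma^j}\cap B_1(x))\le C(\gamma^j)^{k-\ell-\eta}$, and passing from the singular set to its $r$-tube is harmless because a minimal $\gamma^j$-cover of the tube is, up to a bounded multiplicative factor, a $\gamma^j$-cover of the set itself, with each ball contributing $\le C(\gamma^j)^k$ of mass.

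I expect the main obstacle to be the correct formulation and proof of the quantitative rigidity and cone-splitting statements for varifolds, rather than the combinatorics, which transfer mechanically from Part I. The delicate points are: (i) ensuring the compactness theorem applies in the weak$^*$ varifold topology with its Frechet metric $d_*$, so that a limit of near-conical currents is genuinely conical and the limiting object still has the relevant monotonicity and (for the cone-splitting step) stationarity; (ii) the cancelation issue flagged in the text — working with $|I|$ as a varifold rather than as a current is what makes the tangent object well behaved, and one must check the cone-splitting principle at the level of varifolds where there is no orientation to cancel; and (iii) verifying that "$\Theta_s=\Theta_r$ implies conical on the annulus" is genuinely an equality case of monotonicity for integral currents, which is where one invokes \cite{Almgren_InteriorRegMin}, \cite{Simon_MinVar}, \cite{Federer_GeomMeas}. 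Once these analytic inputs are in place, the proof is, as the authors indicate, a transcription of the harmonic-map argument with the evident dictionary.
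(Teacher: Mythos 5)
Your proposal follows essentially the same route as the paper: reduce to a Decomposition Lemma via quantitative differentiation (using the mass-ratio monotonicity formula and a contradiction-based quantitative rigidity theorem in place of the Dirichlet-energy versions), prove the Decomposition Lemma via a Covering Lemma resting on a Cone-splitting Lemma for varifolds, and obtain the second, mass-theoretic inequality by replacing the Euclidean ball-volume estimate with the two-sided density bound $w_k r^k\leq |I|(B_r(y))\leq c(n,\Lambda)r^k$ supplied by monotonicity. This matches the paper's Section on the decomposition lemma for currents, which explicitly refers back to the harmonic-map sections for the details you have reconstructed.
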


\subsection{Quantitative estimates on the regularity scale}

To discuss the main consequences of Theorem \ref{t:current_min_regularity} we first define a notion of the regularity scale involving the second fundamental form $A$ of a current $I\in I^k(M)$.  This  is
an analog of the regularity
scale for maps introduced in Definition \ref{d:harm_reg_scale}.
 However, in the case of a currents, the quantitative nature of what we want to understand dictates that we
 allow for the possibility of multiplicity and multigraphs.  Hence we arrive at the following definition.

\begin{definition}
\label{d:current_reg_scale}
Let $r^N_{0,I}(x)=0$ if the current $I$ is not a union of at most $N$ $C^2$-submanifolds in some neighborhood of $x$.  Otherwise let $r^N_{0,I}(x)$ denote the maximum of
those $r>0$ such that $I\cap B_r(x)$ is such a union.   Define the {\it regularity scale} $r^N_I(x)$ by
\begin{align}
 r^N_I(x)= \max\left\{0\leq r\leq r^N_{0,I}(x): \sup_{B_r(x)}r|A|\leq 1\right\}\, .
\end{align}
\end{definition}
\noindent
Note that $r^N_I(x)$ is a scale invariant quantity. That is, if $r_f(x)=r$ and we rescale $B_r(x)$ to a unit size ball, then $I$ becomes a union of submanifolds with $|A|\leq 1$.  Moreover, if $I$ is stationary, then by standard elliptic regularity, if $r_I(x)\geq r$ then for all $k\in \dZ_+$, there exists $C_k$, depending
on the geometry of $M^n$, such that
$$
\sup_{B_{\frac{r}{2}}(x)}r^{k+1}|\nabla^{k} A|\leq C_k\, .
$$
Thus,  a bound on the regularity scale  $r^N_I(x)$ gives bounds for all derivatives of the second fundamental form in a definite sized neighborhood of $x$.

Next, based on the behavior the regularity scale, we partition $I$ into "good" and "bad" subsets.
\begin{definition}
Given a rectifiable current $I\in I^k(M)$ and any $r>0$ we define
\begin{align}
 \cB^N_{r}(I)= \{x\in I: r^N_{I}(x)\leq r\}.
\end{align}

\end{definition}

The following, Theorem \ref{t:current_min_regularity}, improves the Hausdorff dimension estimates on the singular set of a minimizing codimension 1 current in \cite{Almgren_InteriorRegMin}, \cite{Simon_MinVar}, \cite{Federer_GeomMeas}, to estimates on the regularity scale off sets of appropriately small volume.\footnote{In the case $n=8$ a stronger version was proved by Tom Ilmanen, who showed that away from
a {\it controlled} finite number of points $\{p_\alpha\}$ there is the bound
$|A|(x)\leq C \max |x-p_\alpha|^{-1}$. }

\begin{theorem}
\label{t:current_min_regularity}
Let $I\in I^{n-1}(B_2(x),\partial B_2(x))$ denote denote a minimizing current
with bounded mass
$$
|I|(B_2(x))<\Lambda\, .
$$
Then there exist $C=C(n,\Lambda, \eta)$, $N=N(n,\Lambda,\eta)$, such that
 for all $0<r<1$,
\begin{equation}
\label{smv1}
 \Vol(T_r(\cB^N_{r}(I))\cap B_{1}(x))\leq  Cr^{8-\eta}\, ,\\
\end{equation}
\begin{equation}
\label{smv2}
 |I|(T_r(\cB^N_{r}(I))\cap B_{1}(x))\leq  Cr^{7-\eta}.
\end{equation}
\end{theorem}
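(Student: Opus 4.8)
The plan is to run the exact same argument as in the harmonic map case (Theorem \ref{t:harm_min_regularity}), now using Theorem \ref{t:current_quant_strat} in place of Theorem \ref{t:harm_quant_strat} together with the minimal current analog of the $\epsilon$-regularity theorem, Theorem \ref{t:harm_eps_regularity}. First I would record the $\epsilon$-regularity statement for minimizing hypersurfaces (established, for the current case, in Section \ref{s:current_monotonicity}): there is $\epsilon = \epsilon(n,\Lambda)>0$ and $N = N(n,\Lambda)$ such that if $I$ is $(\epsilon,2,n-8)$-conical at $0^n$ then $r^N_I(0)\geq 1$. This is the exact parallel of Theorem \ref{t:harm_eps_regularity}, the role of "no smooth minimizing harmonic map $s:S^\ell\to N^m$ for $\ell \leq k$" being replaced by the Simons-type fact that there is no stable minimal cone in $\dR^\ell$ with isolated singularity for $\ell \leq 7$, which is exactly what forces $\cS(I)=\cS^{n-8}(I)$ in (\ref{mcde}); the multiplicity $N$ enters because the blow-up limit of a convergent sequence of $(\epsilon_i,2,n-8)$-conical minimizers is a conical minimizer in $\dR^{n-8}$ of the cross section, hence (by the classical regularity theory in low dimensions) a finite union of hyperplanes through the origin, so each $f_i$ is, on the unit ball, a union of at most $N$ $C^2$ graphs with small second fundamental form. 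Contradiction/compactness in the weak$^*$ varifold topology, together with Allard's regularity theorem \cite{Allard_firstvariation} applied to each sheet, closes this step.

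The second step is the reduction itself, which is formally identical to the proof of Theorem \ref{t:harm_min_regularity} in Section \ref{s:harm_Reg2Stratreduction}. Choosing $\eta \leq \eta(n,\Lambda)$ small, the $\epsilon$-regularity theorem above gives the containment
\begin{equation}
\cB^N_r(I)\subseteq \cS^{n-8}_{\eta,r}(I)\, ,
\end{equation}
since a point $x$ with $r^N_I(x)\leq r$ cannot be $(\epsilon,s,n-7)$-conical for any $r \leq s \leq 1$ (otherwise rescaling to scale $s$ and applying $\epsilon$-regularity would give $r^N_I(x)\geq s > r$). Passing to $r$-tubular neighborhoods, $T_r(\cB^N_r(I))\subseteq T_r(\cS^{n-8}_{\eta,r}(I))$, and then Theorem \ref{t:current_quant_strat} with $\ell = n-8$ yields directly
\begin{equation}
\Vol(T_r(\cB^N_r(I))\cap B_1(x))\leq \Vol(T_r(\cS^{n-8}_{\eta,r}(I))\cap B_1(x))\leq C r^{n-(n-8)-\eta} = C r^{8-\eta}\, ,
\end{equation}
which is (\ref{smv1}). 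For (\ref{smv2}) I use the second, mass-weighted estimate of Theorem \ref{t:current_quant_strat}, namely $|I|(T_r(\cS^{n-8}_{\eta,r}(I))\cap B_1(x))\leq C r^{k-(n-8)-\eta}$ with $k = n-1$, which gives $C r^{7-\eta}$ directly; here one only needs that $|I|$ restricted to $T_r(\cB^N_r)$ is controlled by $|I|$ restricted to $T_r(\cS^{n-8}_{\eta,r})$, which is immediate from the set containment. Relabeling $\eta$ and absorbing constants gives the stated form.

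The main obstacle is not the reduction, which is soft, but verifying the $\epsilon$-regularity theorem in the current setting with the multiplicity $N$ present. The delicate point is that, unlike the harmonic map case where the limiting almost-homogeneous object is a single constant map, here the limiting $(n-8)$-conical minimizing current in $\dR^n$ is genuinely a nontrivial configuration, and one must invoke the full strength of the interior regularity theory for area-minimizing hypersurfaces (Almgren \cite{Almgren_InteriorRegMin}, Simons, Federer \cite{Federer_GeomMeas}) to conclude that below the critical dimension such a cone, splitting off $\dR^{n-8}$, must be a union of hyperplanes — and then to upgrade the weak$^*$ varifold convergence to $C^2$ convergence of the individual sheets with a uniform bound on their number, using Allard and standard elliptic estimates. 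Controlling the constant $N = N(n,\Lambda)$ requires only a mass bound, since the number of sheets is controlled by the density ratio, which is bounded by $\Lambda$ via monotonicity. Once this $\epsilon$-regularity input is in hand, the proof is complete as above.
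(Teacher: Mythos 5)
Your reduction is the same as the paper's: invoke an $\epsilon$-regularity theorem to get $\cB^N_r(I)\subseteq\cS^{n-8}_{\eta,r}(I)$ for $\eta$ small, pass to tubular neighborhoods, and cite both estimates of Theorem \ref{t:current_quant_strat} to conclude. Two points of comparison. First, the paper's Theorem \ref{t:current_eps_regularity} applies to \emph{indecomposable} minimizers and gives $r^1_I>1$; the multiplicity $N=N(n,\Lambda)$ enters one step earlier via the homological decomposition $I=\sum_1^{N(n,\Lambda)}\partial A_i$ with $|I|=\sum|\partial A_i|$, after which one argues sheet by sheet. You instead fold the multiplicity directly into the $\epsilon$-regularity statement, arguing that the blow-up limit is a union of at most $N$ hyperplanes detected by Allard. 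Both routes reach the same place; the paper's decomposition is cleaner because it avoids having to handle multi-sheeted convergence inside the $\epsilon$-regularity contradiction argument. Second, the $\epsilon$-regularity statement you record, ``$(\epsilon,2,n-8)$-conical implies $r^N_I(0)\geq 1$,'' is false as written: a $(n-8)$-conical minimizing hypersurface cone need only split off $\dR^{n-8}$, leaving a minimizing cone in $\dR^8$ which can be the Simons cone, so no regularity follows. The correct hypothesis (used in the paper's Theorem \ref{t:current_eps_regularity}) is $(\epsilon,2,n-7)$-conical, which forces the cross section to be a minimizing cone in $\dR^7$, hence a hyperplane. Your containment argument in the next paragraph in fact uses the correct $(\epsilon,s,n-7)$-conical hypothesis, so the overall deduction $\cB^N_r(I)\subseteq\cS^{n-8}_{\eta,r}(I)$ still stands; you should just fix the index in the stated $\epsilon$-regularity so that it is not a false intermediate claim.
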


It follows immediately from Theorem \ref{t:current_min_regularity}  that
\begin{equation}
\label{Mindim}
\dim_{{\rm Min}}\cS(I)\leq n-8\, .
\end{equation}
The following stronger corollary
 is also an immediate consequence of Theorem  \ref{t:current_min_regularity},
together with the density estimate
$$
w_{n-1}r^{n-1} \leq |I|(B_r(y))\leq C(n)\Lambda r^{n-1}\, .
$$

\begin{corollary}
\label{c:current_min_regularity}
 For every $p<7$, there exists $C=C(n,\Lambda,p)$ such that if
$I\in I^{n-1}(B_2(x),\partial B_2(x))$ denotes a minimizing current
with mass $|I|(B_2(x))<\Lambda$, then
\begin{align}
\int_{I\cap B_1(x)}|A|^p\, d|I|\leq \int_{I\cap B_1(x)}r_{I}^{-p}\, d|I|\leq C.
\end{align}
\end{corollary}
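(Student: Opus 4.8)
The plan is to derive Corollary \ref{c:current_min_regularity} directly from Theorem \ref{t:current_min_regularity} together with the two-sided mass density bound $w_{n-1}r^{n-1}\leq |I|(B_r(y))\leq C(n)\Lambda r^{n-1}$, which holds by monotonicity of mass ratio for stationary currents and the comparison argument that a minimizing current has mass ratio bounded below by $w_{n-1}$. The first inequality $\int_{I\cap B_1(x)}|A|^p\,d|I|\leq \int_{I\cap B_1(x)}r_I^{-p}\,d|I|$ is immediate from the definition of the regularity scale: at any point $y$ where $r_I(y)$ is positive and equals $r$, rescaling shows $|A|(y)\leq r_I(y)^{-1}$ pointwise, so $|A|^p\leq r_I^{-p}$ on the set where $r_I>0$; on the complement both sides are treated as $+\infty$ but that set has $|I|$-measure zero by the (standard, previously cited) regularity theory for minimizing hypersurfaces, so it contributes nothing.

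The substance is the bound $\int_{I\cap B_1(x)}r_I^{-p}\,d|I|\leq C$. The plan is to fix $N=N(n,\Lambda,\eta)$ from Theorem \ref{t:current_min_regularity} (with $\eta$ chosen small depending on $p$, say $\eta=(7-p)/2$), and to use the layer-cake / dyadic decomposition. Writing $r_I$ for $r^N_I$, we have $\{y\in I\cap B_1(x): r_I(y)\leq r\}=\cB^N_r(I)\cap B_1(x)\subseteq T_r(\cB^N_r(I))\cap B_1(x)$, so (\ref{smv2}) gives $|I|(\{r_I\leq r\}\cap B_1(x))\leq Cr^{7-\eta}$. Then decompose over dyadic scales $r_j=2^{-j}$: the set $A_j:=\{2^{-j-1}<r_I\leq 2^{-j}\}\cap B_1(x)$ has $|I|(A_j)\leq C 2^{-j(7-\eta)}$, and on $A_j$ one has $r_I^{-p}\leq 2^{(j+1)p}$, hence
\begin{equation}
\int_{I\cap B_1(x)}r_I^{-p}\,d|I|\leq \sum_{j\geq 0}2^{(j+1)p}\,|I|(A_j)\leq C\,2^p\sum_{j\geq 0}2^{j(p-7+\eta)}.
\end{equation}
With $\eta=(7-p)/2$ the exponent $p-7+\eta=-(7-p)/2<0$, so the geometric series converges and the bound $C=C(n,\Lambda,p)$ follows. (The contribution of the finitely many scales with $r_I(y)\geq 1$, i.e. $j<0$ or $y$ with $r_I\geq 1$, is controlled trivially by $|I|(B_1(x))\leq C(n)\Lambda$ via the mass upper bound.)

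The main obstacle, such as it is, is bookkeeping rather than conceptual: one must be careful that the ``bad set at scale $r$'' in the statement of Theorem \ref{t:current_min_regularity} is exactly the sublevel set $\{r_I\leq r\}$ rather than its tube, and that passing from the tube estimate to the sublevel estimate only costs a harmless factor (indeed $\cB^N_r(I)\subseteq T_r(\cB^N_r(I))$ directly, so no loss). One should also note that $N$ must be fixed once and for all before the decomposition — it depends on $\eta$, hence on $p$, which is why the final constant depends on $p$ — and that the regularity scale $r_I$ appearing in the corollary is $r^N_I$ for this same $N$. The pointwise inequality $|A|^p\leq r_I^{-p}$ needs the scale-invariance remark following Definition \ref{d:current_reg_scale}, namely that rescaling $B_{r_I(x)}(x)$ to unit size makes $|A|\leq 1$, so at the center $|A|(x)\leq r_I(x)^{-1}$; this is the only place the precise form of the regularity scale enters.
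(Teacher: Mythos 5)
Your proof is correct and follows the route the paper intends: the paper itself only remarks that the corollary is an ``immediate consequence'' of Theorem \ref{t:current_min_regularity} together with the density bound, and the layer-cake / dyadic summation you carry out is precisely the standard argument being alluded to. The reduction $\cB^N_r(I)\subseteq T_r(\cB^N_r(I))$, the pointwise bound $|A|\leq r_I^{-1}$ from scale invariance, the choice $\eta=(7-p)/2$, and the note that $N$ (hence the meaning of $r_I=r^N_I$) is fixed by $\eta$ and therefore by $p$ are all exactly the bookkeeping the paper suppresses.

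One small observation for completeness: you use estimate (\ref{smv2}) directly, so the two-sided density bound quoted by the paper plays no explicit role in your write-up. Its purpose in the paper is (a) to pass from the volume estimate (\ref{smv1}) to the mass estimate (\ref{smv2}) by a Vitali-type covering count of $\cB^N_r(I)$ by $r$-balls, and (b) to bound the contribution from the scales $r_I\geq 1$; you obtain the latter from the hypothesis $|I|(B_2(x))<\Lambda$, which is equivalent in strength. Since the theorem already records (\ref{smv2}) separately, your omission of the density argument is harmless.
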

\begin{remark}
By convention we take $|A|\equiv \infty$ on the singular set.
\end{remark}

As in Part I, we will focus on the $M^n=\dR^n$ case, as the more general case is identical up to some lower order corrections.  Additionally, since the techniques of this part of the paper mimic those of the harmonic maps section, when reasonable we will refer back to that section for details.

\section{Preliminary results}

In this section, we prove some preliminary results, which are required for our main theorems.  The
quantitative rigidity
theorem, Theorem \ref{t:current_monotone_rigidity}, is the counterpart of the ``almost volume cone implies almost metric cone" theorem of \cite{cheegercoldingalmostrigidity} for manifolds with lower Ricci
curvature bounds; compare also Theorem \ref{t:harm_monotone_rigidity} for harmonic maps.
In essence, it states that if the density of a stationary current (defined in (\ref{e:current_density}))
does not change
much between two scales, then the current is almost conical in the sense of Definition \ref{d:current_almost_conical}.  As in \cite{cheegernaber_quantstrateistein}, this plays an important role in the proof of quantitative stratification theorem, Theorem \ref{t:current_quant_strat}.  Theorem \ref{t:current_eps_regularity} is an $\epsilon$-regularity theorem which, when combined with Theorem \ref{t:harm_quant_strat}, gives Theorem \ref{t:current_min_regularity}, our main result for the regularity scale. This also parallels the discussion of \cite{cheegernaber_quantstrateistein},
as well as that of Part I.

\subsection{Monotonicity, quantitative rigidity and $\epsilon$-regularity}\label{s:current_monotonicity}

In this section we consider currents $I\in I^k(\dR^n)$.
Given  $I$, we define for $x\in \dR^n$ and $0<r<1$ the density, $\theta_r(x)$, by

\begin{align}\label{e:current_density}
\theta_r(x)=r^{-k}|I|(B_r(x)).
\end{align}

It is well known that if $I$ is a stationary current, or varifold, then for each $x$ this function is monotone nondecreasing in $r$ with $\theta_s(x)=\theta_r(x)$ if and only if $I$ is dialation invariant on the annulus $A_{s,r}(x)$;  see \cite{Federer_GeomMeas}.  More precisely, if $(x-y)^N$ denotes the projection of $(x-y)$ to the perpendicular of $I$ (which makes sense $a.e.$) then
 we have the following monotonicity formula
\begin{align}
\theta_r(x)-\theta_s(x) = \int_{A_{s,r}(x)\cap I} \frac{|(x-y)^N|^2}{|x-y|^{k+2}}\, ,
\end{align}
and in particular, $\theta_r(x)$ is monotone nondecreasing:
$$
\theta_r(x)\uparrow\, .
$$
  The following quantitative rigidity theorem is then an immediate consequence of this point and a contradiction argument, see Theorem \ref{t:harm_monotone_rigidity}.

\begin{theorem}[Quantitative rigidity]
\label{t:current_monotone_rigidity}
For all $ \Lambda,\epsilon >0$, there exists $\delta=\delta(n,\Lambda,\epsilon)>0$,
$r=r(n,\Lambda,\epsilon)>0$, such that if $I\in I^k(B_2(0^n),\partial B_2(0^n))$ denotes a stationary current with $|I|(B_2(0))<\Lambda$,
satisfying
\begin{align}\label{e:current_almostrigid}
\theta_1(0^n)-\theta_r(0^n)\leq \delta\, ,
\end{align}
then $I$ is $(\epsilon,1,0)$-conical at $0^n$.
\end{theorem}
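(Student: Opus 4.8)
The plan is to mimic verbatim the contradiction argument used in the proof of Theorem~\ref{t:harm_monotone_rigidity}, substituting the monotonicity formula for the density $\theta_r$ of a stationary current for the monotonicity formula for the normalized Dirichlet energy. Suppose the statement fails for some fixed $\Lambda,\epsilon>0$. Then there is a sequence of stationary integral currents $I_i\in I^k(B_2(0^n),\partial B_2(0^n))$ with $|I_i|(B_2(0))<\Lambda$ and
\[
\theta_1^{I_i}(0^n)-\theta_{i^{-1}}^{I_i}(0^n)\le i^{-1},
\]
yet none of the $I_i$ is $(\epsilon,1,0)$-conical at $0^n$, i.e. $d_*(|I_i|_{0,1},|J|)\ge\epsilon$ for every $0$-conical varifold $|J|$.

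First I would extract a limit. By the uniform mass bound and the standard compactness theorem for stationary integral varifolds (Allard), after passing to a subsequence $|I_i|_{0,1}\to |I|_\infty$ in the weak$^*$ topology on varifolds on $\dR^n$ (restricted, say, to $B_1(0^n)$, which suffices since the definition of $(\epsilon,1,0)$-conical only sees the ball $B_1$), and $|I|_\infty$ is again a stationary integral varifold with density bounded by $\Lambda$. Next I would identify $|I|_\infty$ as $0$-conical. The monotonicity formula gives, for each $i$ and each pair of radii $i^{-1}\le s<t\le 1$,
\[
\theta_t^{I_i}(0^n)-\theta_s^{I_i}(0^n)=\int_{A_{s,t}(0^n)\cap I_i}\frac{|y^N|^2}{|y|^{k+2}}\,d|I_i|\le i^{-1}.
\]
Passing to the limit (lower semicontinuity of the left side under weak$^*$ convergence, or equivalently the fact that the limiting density is monotone and its two-scale increment is bounded by $\liminf i^{-1}=0$), one gets that for all $0<s<t\le 1$ the corresponding integral for $|I|_\infty$ vanishes, hence $y^N=0$ $|I|_\infty$-a.e. on $B_1(0^n)\setminus\{0\}$; equivalently $|I|_\infty$ is dilation invariant, i.e. $0$-conical, on that ball. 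Finally, since $|I|_\infty$ is $0$-conical and $|I_i|_{0,1}\to|I|_\infty$ in $d_*$, for $i$ large we would have $d_*(|I_i|_{0,1},|I|_\infty)<\epsilon$ with $|I|_\infty$ a $0$-conical varifold, contradicting the choice of the $I_i$. The dependence of $\delta$ and $r$ on $n,\Lambda,\epsilon$ (and the absence of any explicit form for this dependence) is inherited automatically from the contradiction scheme, exactly as remarked after Theorem~\ref{t:harm_monotone_rigidity}.

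The step I expect to require the most care is the limiting argument identifying $|I|_\infty$ as $0$-conical: one must make sure the monotone quantity $\theta_r^{I_i}(0^n)$ passes to the limit correctly and that the a.e. statement $y^N=0$ genuinely yields conicality of the limit varifold (this is where stationarity of $|I|_\infty$, needed to run the monotonicity formula for the limit, is used, together with the standard fact that a stationary varifold with vanishing radial part on an annulus is a cone there). A minor additional point is that convergence of $|I_i|_{0,1}$ need only be arranged on $B_1(0^n)$, so no issue arises from the shrinking domains $B_{i}(0^n)$ of the rescaled currents. Everything else is a routine transcription of the harmonic map argument, which is why the paper states this theorem as ``an immediate consequence'' of monotonicity and a contradiction argument.
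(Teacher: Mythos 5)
Your proposal is correct and takes essentially the same approach as the paper, which gives no written-out proof but simply declares the result an immediate consequence of the monotonicity formula together with the same contradiction-and-compactness scheme used for Theorem~\ref{t:harm_monotone_rigidity}. You have merely made explicit what the paper leaves implicit (Allard compactness for stationary integral varifolds replacing weak $H^1$/strong $L^2$ compactness for harmonic maps, and the density monotonicity replacing Dirichlet monotonicity), so there is nothing to add.
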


Next, we focus on {\it minimizing} currents $I\in I^{n-1}(B_2(0^n),\partial B_{2}(0^n))$.  In this context, we prove the necessary $\epsilon$-regularity theorem which enables us to turn Theorem \ref{t:current_quant_strat} into an effective estimate on the regularity scale.

Recall that if $I\in I^{n-1}(B_2(0^n),\partial B_{2}(0^n))$, then by the homological theorem 4.5.17 of \cite{Federer_GeomMeas}, there exist at most countably many open subsets $A_i\subseteq B_2(0)$ with boundaries $\partial A_i\in I^{n-1}(B_2(0^n))$ such that
\begin{equation}
\label{countably}
I=\sum_1^\infty\partial A_i\cap B_2(0^n)\, ,
\end{equation}
\begin{equation}
\label{countably1}
|I|=\sum |\partial A_i|\, .
\end{equation}
In particular, if $I$ is minimizing, then so are the $\partial A_i$.  Moreover, if $I$ satisfies the mass bound
$|I|(B_2(0^n))<\Lambda$, then by a density argument, at most $N(n,\Lambda)$ of the $\partial A_i$'s have support in $B_{\frac{3}{2}}(0^n)$.  We will call a minimal current $I\in I^{n-1}(B_2(0^n),\partial B_{2}(0^n))$ {\it indecomposable} if $I=\partial A$ for some such $A$.

In the proof of the following we will assume Theorem \ref{t:current_quant_strat}.  The remainder of the paper will be devoted to Theorem \ref{t:current_quant_strat}, and its proof is quite independent of the following.

\begin{theorem}[$\epsilon$-regularity]\label{t:current_eps_regularity}
 For all $\Lambda>0$,  there exists $\epsilon=\epsilon(n,\Lambda)>0$ with the following property.  Let $I\in I^{n-1}(B_{2}(0^n),\partial B_{2}(0^n))$ satisfy $|I|(B_{2}(0^n))<\Lambda$ and assume in addition
that I is indecomposable, minimizing, and  $(\epsilon,2,n-7)$-conical. Then $r^1_I(x)> 1$.
\end{theorem}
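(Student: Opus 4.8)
The plan is to follow the same contradiction-plus-compactness strategy used in the proof of Theorem~\ref{t:harm_eps_regularity}, reducing the statement to the classical regularity theory for area-minimizing hypersurfaces together with the dimension-reduction fact \eqref{mcde} that the singular set of a minimizing hypersurface cone is contained in its vertex-type stratum. First I would record the base case: if $I$ is indecomposable, minimizing, and close in $d_*$ to a \emph{plane} (i.e. $(\epsilon,2,n-1)$-conical for small $\epsilon$), then by Allard's regularity theorem \cite{Allard_firstvariation} (or the De~Giorgi--type estimates for minimizing boundaries) one gets $\sup_{B_1(0^n)}|A|\leq 1$, hence $r^1_I(0^n)>1$. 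The content of the theorem is to propagate this from planes down to $(n-7)$-conical currents.

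Next I would set up the contradiction argument. Suppose no such $\epsilon$ works; then there are $\epsilon_i\to 0$ and indecomposable minimizing currents $I_i\in I^{n-1}(B_2(0^n),\partial B_2(0^n))$ with $|I_i|(B_2(0^n))<\Lambda$, each $(\epsilon_i,2,n-7)$-conical, but with $r^1_{I_i}(0^n)\leq 1$. By the compactness theorem for minimizing currents (bounded mass, standard in \cite{Federer_GeomMeas}, \cite{Simon_MinVar}), after passing to a subsequence $|I_i|_{0^n,1}\to |I|$ in the weak$^*$ varifold topology, where $I$ is an $(n-1)$-dimensional minimizing current on $B_2(0^n)$ that is \emph{$(n-7)$-conical} (the limit of the approximating $(n-7)$-conical varifolds, using that the space of $\ell$-conical varifolds is closed under $d_*$), and $|I|(B_2(0^n))\leq\Lambda$. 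Since $I$ is $(n-7)$-conical it is, in particular, a cone with respect to an $(n-7)$-plane $V^{n-7}$, so by the dimension-reduction identity \eqref{mcde} applied to the cross-sectional minimizing cone, the singular set $\mathcal S(I)$ has dimension $\le (n-7)+(7-8) $ — more to the point, a minimizing hypersurface cone in $\dR^7$ is \emph{smooth away from the origin}, hence after splitting off $V^{n-7}$ the limit $I$ is smooth away from $V^{n-7}$, and in fact (by the classification of minimizing hypersurface cones and the assumed symmetry) $I$ restricted to $B_{3/2}(0^n)$ must be a multiplicity-one hyperplane; in particular $I$ is smooth on $B_{3/2}(0^n)$ with $\sup_{B_{3/2}(0^n)}|A|<\infty$. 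The key technical point is that regularity and curvature bounds pass back to the $I_i$: by Allard's theorem and the minimizing property, weak$^*$ convergence of minimizing currents to a smooth multiplicity-one piece forces $C^2$ (indeed $C^\infty$) convergence on a slightly smaller ball, so for $i$ large $I_i$ is a single $C^2$ graph on $B_1(0^n)$ with $\sup_{B_1(0^n)}|A|\le 1$, i.e. $r^1_{I_i}(0^n)>1$, contradicting $r^1_{I_i}(0^n)\le 1$.

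The main obstacle, and the step deserving the most care, is the passage from weak$^*$ varifold convergence to $C^2$-convergence of the $I_i$ near $0^n$. One has to know that the limit varifold is not merely $(n-7)$-conical but is actually (a multiple of) a plane on a definite ball — this is where indecomposability and the absence of minimizing cone singularities in $\dR^7$ are essential, since an $(n-7)$-conical minimizing hypersurface could a priori still be singular along $V^{n-7}$ if cross-sectional singular cones in $\dR^8$ (e.g. Simons' cone) appeared; the $(n-7)$ rather than $(n-8)$ margin is precisely what rules this out, leaving only a hyperplane cross-section. Once the limit is a hyperplane with multiplicity one, Allard's $\epsilon$-regularity theorem gives uniform $C^{1,\alpha}$ graphical estimates for the $I_i$ on a fixed smaller ball, and then Schauder/elliptic estimates for the minimal surface equation upgrade these to uniform $C^2$ bounds, yielding $\sup_{B_1(0^n)}|A|\le 1$ for $i$ large. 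As in the general harmonic-map case treated in Part~I, for $M^n\ne\dR^n$ one replaces the hypothesis ``$(\epsilon,2,n-7)$-conical'' by ``$(\epsilon,2r,n-7)$-conical'' with $r\le r(n,\Lambda)$, blows up, and runs the identical argument.
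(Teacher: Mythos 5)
Your overall strategy matches the paper's: argue by contradiction, extract a weak limit $I_\infty$ of the sequence of indecomposable minimizing codimension-one currents $I_i$, observe that $I_\infty$ is an $(n-7)$-conical minimizing current and hence (by \cite{Simon_MinVar}, i.e.\ the absence of singular minimizing hypersurface cones in $\dR^7$) a multiplicity-one hyperplane, conclude that the $I_i$ are $(\delta_i,2,n-1)$-conical for $i$ large, and invoke the near-plane base case to contradict $r^1_{I_i}(0^n)\le 1$. The one place your route genuinely departs from the paper's is in how the varifold convergence $|I_i|\to|I_\infty|$, i.e.\ the absence of cancellation in the limit, is secured. You appeal to the classical compactness theory for codimension-one minimizers, which is a sound shortcut given that an indecomposable minimizer is the boundary of a set of finite perimeter and mass converges for such sequences. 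The paper deliberately avoids invoking this externally: it first derives the weak bound $\Vol(T_r(\cB^1_r(J))\cap B_2(0^n))\le Cr^{2-\eta}$ for every indecomposable minimizing $J$ by combining the planar base case with Theorem \ref{t:current_quant_strat} (noting carefully that the proof of Theorem \ref{t:current_quant_strat} is independent of the $\epsilon$-regularity theorem, so there is no circularity), and uses this uniform codimension-two bound on the non-graphical set to conclude that the $I_i$ converge as smooth single-valued graphs away from a set of $(n-1)$-measure zero, hence $|I_i|\to|I_\infty|$. Both routes work; the paper's is self-contained and exhibits a nontrivial bootstrap between the quantitative stratification theorem and the $\epsilon$-regularity theorem that is worth surfacing. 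Two small points: the paper cites \cite{Almgren_InteriorRegMin} rather than Allard for the near-plane base case (either suffices), and the multiplicity-one conclusion for the limit hyperplane, which you assert in passing, is exactly where indecomposability enters and deserves to be flagged explicitly.
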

\begin{proof}[Proof of Theorem \ref{t:current_eps_regularity} given Theorem \ref{t:current_quant_strat}]
It is a consequence of \cite{Almgren_InteriorRegMin} that there exists $\epsilon(n,\Lambda)>0$ such that if $I$ is $(\epsilon,2,n-1)$-conical then $r^1_I(x)>1$.  It  follows that if $\eta\leq \eta(n,\Lambda)$, then $\cB_{r}(I)\subseteq \cS^{n-2}_{\eta,r}(I)$.  In particular, for all sufficiently small $\eta$, we obtain
$$
\Vol(T_r(\cB_r(I))\cap B_1(x))\leq
\Vol(T_r(\cS^{n-2}_{\eta,r}(I)))\leq C(n,\Lambda,\eta)r^{2-\eta}\, ,
$$
and
\begin{equation}\label{weak}
|I|(T_r(\cB_r(I))\cap B_1(x))\leq \Vol(T_r(\cS^{n-2}_{\eta,r}(I)))\leq C(n,\Lambda,\eta)r^{1-\eta}\, .
\end{equation}

This estimate, which holds for any $I$ which is indecomposable and minimizing, plays the role of a weak version of Theorem \ref{t:current_min_regularity}.

Relation (\ref{weak}) implies the following:  If $I_i\in I^{n-1}(B_2(0^n),\partial B_2(0^n))$ is a sequence of indecomposable minimizing currents with $|I_i|(B_2(0^n))<\Lambda$, then a subsequence $I_j$
converges weakly to a limit indecomposable minimizing current
$
I_\infty\in I^{n-1}(B_2(0^n),\partial B_2(0^n))\, ,
$
and in addition,
$$
|I_i|\rightarrow |I_\infty|\, .
$$
This holds because the convergence is of smooth single valued graphs away from a set of $(n-1)$-measure zero.  In particular the limit of the varifolds $|I_i|$ is the varifold $|I_\infty|$ and there is no cancelation  of the rectifiable currents.

Now we can finish the proof of the Theorem.
Assume there does not exist $\epsilon(n,\Lambda)$
as in the statement of the theorem.
Then we can find a sequence $\epsilon_i\rightarrow 0$ with indecomposable,
minimizing $I_i\in I^{n-1}(B_{2}(0^n),\partial B_{2}(0^n))$ such that
$|I_i|(B_{2}(0^n))<\Lambda$ and $I_i$ is $(\epsilon_i,2,n-7)$-conical, but $r^1_{I_i}(0^n)\leq 1$.
After passing to a subsequence we can let
$I_{i}\rightarrow I_\infty\in I^{n-1}(B_2(0^n),\partial B_2(0^n))$ as above.
Now $I_\infty$ is a minimizing current which is $(n-7)$-conical. Thus, it follows from
\cite{Simon_MinVar} that $I_\infty$ is a hyperplane.  Further,
 since $|I_{i}|\rightarrow |I_\infty|$ as in the last paragraph,
we have for $\delta_i\rightarrow 0$ that $I_{i}$ is
 $(\delta_i,2,n-1)$-conical.  Hence, for $i$ sufficiently large, the proof can completed by
 appealing to  the statements established in the preceding   paragraphs.
\end{proof}

\subsection{Reduction of Theorem \ref{t:current_min_regularity} to Theorem \ref{t:current_quant_strat}}\label{s:current_Reg2Stratreduction}

We now show that Theorem \ref{t:current_min_regularity} follows
easily from  Theorems \ref{t:current_quant_strat} and \ref{t:current_eps_regularity}.

\begin{proof}[Proof of Theorem \ref{t:current_min_regularity}]

As in (\ref{countably}), (\ref{countably1}), write  $I\in I^{n-1}(B_2(0^n),\partial B_2(0^n))$
as a union
$$
I=\sum_1^{N(n,\Lambda)}\partial A_i\, ,
$$
such that $|I|=\sum |\partial A_i|$.  Thus, by applying the arguments to each piece,
$\partial A_i$, we can assume without loss of generality, that $I$ is indecomposable.

It follows from  Theorem \ref{t:current_eps_regularity}
 that if $\eta\leq \eta(n,\Lambda)$ then
$\cB_{r}(I)\subseteq \cS^{n-8}_{\eta,r}(I)$.
  In particular, for all suffiently small
$\eta$,  we have $T_r(\cB_r(f))
\subseteq T_r(\cS^{n-8}_{\eta,r}(I))$.
Therefore, we obtain (\ref{smv1}), (\ref{smv2}),
$$
\Vol(T_r(\cB_r(I))\cap B_1(x))\leq
\Vol(T_r(\cS^{n-8}_{\eta,r}(I)))\leq C(n,\eta,\Lambda)r^{8-\eta}\, ,
$$
$$
|I|(T_r(\cB_r(I))\cap B_1(x))\leq \Vol(T_r(\cS^{n-8}_{\eta,r}(I)))\leq C(n,\eta,\Lambda)r^{7-\eta}\, ,
$$
which completes the proof of Theorem \ref{t:current_min_regularity}.
\end{proof}

\section{The decomposition lemma and proof of Theorem \ref{t:current_quant_strat}}\label{s:current_decomposition}
Modulo  obvious changes from harmonic maps to stationary currents,
the arguments of this section
parallel almost verbatim those of Sections
 \ref{s:harm_decomposition}, \ref{s:harm_cone_splitting}.
Therefore,
 we will refer to these sections for most details.

 We begin by describing the Decomposition Lemma for stationary currents.
As with harmonic maps, the goal is cover the effective singular sets
$$
\cS^\ell_{\eta,\gamma^j}(I)\subseteq\cup \cC^\ell_{\eta,\gamma^j}
$$
by a not too large collection of sets $\{\cC^\ell_{\eta,\gamma^j}\}$,
each of which is itself a union of a controlled number of balls
$\cC^\ell_{\eta,\gamma^j} =\cup B_{\gamma^j}$.  Given appropriate
quantitative
control on the number of such sets $\{\cC^\ell_{\eta,\gamma^j}\}$ and the number of balls $B_{\gamma^j}$ in each $\cC^\ell_{\eta,\gamma^j}$ (see Lemma \ref{l:current_decomposition})
 the proof of Theorem \ref{t:current_quant_strat} follows almost
 immediately.  Roughly speaking,
 the sets $\{\cC^\ell_{\eta,\gamma^j}\}$ are constructed by grouping
together those points of $\cS^\ell_{\eta,\gamma^j}(I)$ whose conical
behavior on the same scales is similar; see Sections
\ref{s:harm_decomposition} and \ref{s:harm_cone_splitting}
 for a more complete description.

\begin{lemma}[Decomposition Lemma]\label{l:current_decomposition}
There exists $c_1(n), c_0(n),K(n,\Lambda,\eta,\gamma), Q(n,\Lambda,\eta,\gamma)>0$ such that for each $j\in\dZ_+$:
\begin{enumerate}
\item The set $\cS^\ell_{\eta,\gamma^j}(I)\cap B_1(0)$ is contained in the union of at most
$j^K$ nonempty sets $\cC^\ell_{\eta,\gamma^j}$.
\item Each set $\cC^\ell_{\eta,\gamma^j}$  is the union of at most
$(c_1\gamma^{-k})^{Q}\cdot (c_0\gamma^{-\ell})^{j-Q}$ balls of radius $\gamma^j$.
\end{enumerate}
\end{lemma}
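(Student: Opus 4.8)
The plan is to follow the harmonic map case (Lemma \ref{l:harm_decomposition}) essentially verbatim, with only the dictionary changes dictated by the different monotone quantity and the different cone-splitting statement. The proof has two parts corresponding to the two items, and both are carried out by the same inductive construction of the sets $\cC^\ell_{\eta,\gamma^j}$ via the tuples $T^j(x)$ recording good/bad scales, where now ``good'' at scale $\gamma^i$ means $x$ lies in the low-nonconicality set $L_{\gamma^{-n},\gamma^i,\epsilon}(I)$, defined by $\cN_t(I,B_r(x)):=\inf\{\psi>0: |I|_{y,tr}\text{ is }(\psi,tr,0)\text{-conical}\}<\epsilon$. Here $\epsilon=\epsilon(n,\eta,\gamma)$ is fixed to be the constant produced by the corresponding Covering Lemma for currents (the analog of Lemma \ref{l:harm_splitting}).

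First I would prove item 1, the bound $j^K$ on the number of sets. This is the quantitative differentiation step. Replace the normalized Dirichlet energy $\theta_r(x)$ by the density $\theta_r(x)=r^{-k}|I|(B_r(x))$ of \eqref{e:current_density}, which is monotone nondecreasing with total variation $\theta_1-\theta_{\gamma^j}\leq\theta_1\leq C(n)\Lambda$ by the density estimates. Define $\cW_{s,t}(x):=\theta_t(x)-\theta_s(x)\geq0$; this is superadditive on adjacent intervals exactly as in \eqref{e:additivity}, so along a disjoint family of intervals $[\gamma^i,\gamma^{i-n}]$ the sum of the $\cW_{\gamma^i,\gamma^{i-n}}$ is at most $C(n)\Lambda$. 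Hence for any $\delta>0$ the number $N$ of scales $i$ with $\cW_{\gamma^i,\gamma^{i-n}}>\delta$ is at most $(n+1)C(n)\Lambda\delta^{-1}$. By the quantitative rigidity theorem for currents, Theorem \ref{t:current_monotone_rigidity}, there is $\delta=\delta(n,\Lambda,\epsilon)$ such that $\cW_{\gamma^i,\gamma^{i-n}}\leq\delta$ forces $x\in L_{\gamma^{-n},\gamma^i,\epsilon}(I)$, i.e. the $i$-th entry of $T^j(x)$ is $0$. Since $\epsilon$ has been fixed in terms of $n,\eta,\gamma$, this gives $|T^j|\leq K(n,\Lambda,\eta,\gamma)$ whenever $E_{T^j}\neq\emptyset$, and then the number of admissible tuples is at most $\binom{j}{K}\leq j^K$, which is item 1.

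For item 2, at each step of the induction a ball $B_{\gamma^{j-1}}(x)$ of $\cC^\ell_{\eta,\gamma^{j-1}}$ is covered by balls of radius $\gamma^j$ centered in the relevant intersection with $\cS^\ell_{\eta,\gamma^j}(I)\cap E_{T^j}$. If the $j$-th entry of $T^j$ is $1$, or if $j\leq Q$, we use the trivial doubling bound of $c_1(n)\gamma^{-k}$ balls. If $j>Q$ and the $j$-th entry is $0$, we invoke the Covering Lemma for currents: when $\cN_{\gamma^{-n}}(I,B_{\gamma^{j-1}}(x))\leq\epsilon$ and $x\in\cS^\ell_{\eta,\gamma^j}(I)$, the set $B_{\gamma^{j-1}}(x)\cap\cS^\ell_{\eta,\gamma^j}(I)\cap L_{\gamma^{-n},\gamma^j,\epsilon}(I)$ lies in a $\tfrac1{10}\gamma^j$-tube about an $\ell'$-plane with $\ell'\leq\ell$, hence is covered by $\leq c_0(n)\gamma^{-\ell}$ balls of radius $\gamma^j$. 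This Covering Lemma is itself deduced from the cone-splitting principle for currents --- if $|J|$ is $\ell$-conical about $V^\ell$ at $y$ and $0$-conical at $z\notin y+V^\ell$ then $|J|$ is $(\ell+1)$-conical about $\mathrm{span}\{z-y,V^\ell\}$ --- promoted to a quantitative statement by the usual compactness/contradiction argument (using weak$^*$ compactness of varifolds with bounded mass and the fact that $\theta_r$ is preserved under the weak$^*$ limit), exactly paralleling Lemma \ref{l:harm_cone_splitting} and Corollary \ref{c:harm_inductive}. Multiplying the per-step bounds over the $j$ scales, with at most $Q$ of them ``bad'', yields $(c_1\gamma^{-k})^Q(c_0\gamma^{-\ell})^{j-Q}$ balls, which is item 2. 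Finally, the deduction of Theorem \ref{t:current_quant_strat} from Lemma \ref{l:current_decomposition} is the computation in \eqref{e:main} with the choice $\gamma=c_0^{-2/\eta}$, and the mass estimate follows from the volume estimate together with the upper density bound $|I|(B_r(y))\leq C(n)\Lambda r^{n-1}$.

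The only genuinely new point, and the main thing to check, is that the ingredients it relies on --- quantitative rigidity (Theorem \ref{t:current_monotone_rigidity}) and the cone-splitting mechanism --- hold in the varifold/current setting with the weak$^*$ topology $d_*$ in place of $L^2$ convergence of maps. The contradiction arguments require that a weak$^*$ limit of stationary currents with bounded mass is again stationary, that the density is weak$^*$ continuous along such limits (so $\cW$ passes to the limit and the limit is dilation-invariant hence $0$-conical), and that the cone-splitting identity survives under weak$^*$ convergence. All of these are standard in geometric measure theory (Federer, Simon, Almgren), so I expect this to be routine; the bookkeeping of constants $K,Q,c_0,c_1$ and the index shift from $n-k$ to $k-\ell$ in the mass estimate are the only places where care is needed.
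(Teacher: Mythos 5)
Your proof is correct and follows essentially the same approach as the paper, which in Section \ref{s:current_decomposition} simply refers back to the harmonic-map Decomposition Lemma (Sections \ref{s:harm_decomposition}--\ref{s:harm_cone_splitting}) with the dictionary changes you describe: density $\theta_r(x)=r^{-k}|I|(B_r(x))$ in place of the normalized Dirichlet energy, weak$^*$ varifold topology in place of $L^2$, the current-version quantitative rigidity Theorem \ref{t:current_monotone_rigidity}, and the cone-splitting principle for varifolds. You correctly reproduce both the quantitative-differentiation argument for item~1 and the inductive covering argument (trivial $\gamma^{-k}$ bound on bad/small scales, Covering Lemma bound $c_0\gamma^{-\ell}$ on good scales) for item~2, and you rightly flag the weak$^*$ compactness and continuity facts as the only points needing verification in the current setting.
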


Given the Decomposition Lemma we observe that the proof of Theorem \ref{t:current_quant_strat} now follows as in Section \ref{s:harm_decomposition}.  Note however, that for the second estimate:
$$
|I|(T_r(\cS^\ell_{\eta,r}(I))\cap B_1(x))\leq Cr^{k-\ell-\eta}\, ,
$$
one should replace the volume estimate (\ref{e:harm_volume}) with the estimate
$$
w_k r^k\leq |I|(B_r(x))\leq c(n,\Lambda) r^k\, .
$$
This estimate itself follows immediately from the monotonicity formula.

The proof of Lemma \ref{l:current_decomposition} itself follows directly from a Covering Lemma whose statement is
analogous to that of Lemma \ref{l:harm_splitting}.  As in
Section \ref{s:harm_cone_splitting}, the proof
of the Covering Lemma follows from a Cone-splitting Lemma
which is stated in the next subsection.

\subsection{Cone-splitting Lemma}
\label{s:current_cone_splitting}

We begin with the following observation; compare Section \ref{s:harm_cone_splitting}.
\vskip2mm
\noindent
{\bf Cone-splitting principle for varifolds.}
Let $|I|$ denote a $k$-varifold on $\dR^n$ which is $\ell$-conical with respect to the
$\ell$-plane $V^\ell$.  Assume in addition that for some $y\not\in V^\ell$ that $|I|$ is also
$0$-conical with respect to $y$.  Then it follows that $|I|$ is $(\ell+1)$-conical
with respect to the $(\ell+1)$-plane ${\rm span}\{y,V^\ell\}$.
\vskip1mm

From this observation and a contradiction argument as in Theorems \ref{t:current_monotone_rigidity}, \ref{t:harm_monotone_rigidity} we immediately
obtain the following.

\begin{lemma}[Cone-Splitting Lemma]
\label{l:current_cone_splitting}
 For all $\Lambda, \epsilon,\tau,\gamma>0$ there exists
$\delta=\delta(n,\Lambda,\epsilon,\tau,\gamma)>0$, such that the following holds: Let
$I\in I^k(B_{\gamma^{-1}}(0^n),\partial B_{\gamma^{-1}}(0^n))$
denote a stationary current with mass $|I|(B_{\gamma^{-1}}(0^n))<\Lambda$,
 such that:
\begin{enumerate}
 \item $I$ is $(\delta,\gamma^{-1},\ell)$-conical at $0$ with respect to $V^\ell_0$.
 \item There exists $y\in B_1(0^n)\setminus T_{\tau}(V^\ell_{0})$ such that $I$ is
$(\delta,2,0)$-conical at $y$.
\end{enumerate}
Then $I$ is $(\epsilon,1,\ell+1)$-conical at $0^n$.
\end{lemma}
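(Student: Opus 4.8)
The plan is to mimic exactly the contradiction argument used for Theorem \ref{t:harm_monotone_rigidity} and the harmonic-map Cone-Splitting Lemma \ref{l:harm_cone_splitting}, now in the varifold setting. Suppose the statement fails for some fixed $\Lambda,\epsilon,\tau,\gamma>0$. Then there is a sequence $\delta_i\to 0$ and stationary currents $I_i\in I^k(B_{\gamma^{-1}}(0^n),\partial B_{\gamma^{-1}}(0^n))$ with $|I_i|(B_{\gamma^{-1}}(0^n))<\Lambda$ satisfying hypotheses (1) and (2) with constant $\delta_i$ — so $I_i$ is $(\delta_i,\gamma^{-1},\ell)$-conical at $0$ with respect to some $\ell$-plane $V^\ell_{0,i}$, and there is $y_i\in B_1(0^n)\setminus T_\tau(V^\ell_{0,i})$ at which $I_i$ is $(\delta_i,2,0)$-conical — but $I_i$ is not $(\epsilon,1,\ell+1)$-conical at $0^n$. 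By the compactness theorem for stationary integral currents (and the mass bound), after passing to a subsequence $|I_i|\to|I_\infty|$ weak${}^*$ as varifolds, $V^\ell_{0,i}\to V^\ell_0$ in the Grassmannian, and $y_i\to y_\infty\in \bar B_1(0^n)\setminus T_\tau(V^\ell_0)$ (the last because $T_\tau$ is an open condition and $\tau$ is fixed, so the complement is closed and the limit point stays off $T_\tau(V^\ell_0)$).

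Next I would identify the symmetries of the limit. Since $\delta_i\to 0$ and the "$(\delta,r,\ell)$-conical'' condition is defined via the Frechet distance $d_*$, which is continuous under weak${}^*$ convergence, passing to the limit gives that $|I_\infty|$ is exactly $\ell$-conical with respect to $V^\ell_0$ and exactly $0$-conical with respect to $y_\infty$. Here one must check that the defining plane passes to the limit correctly — this is where the convergence $V^\ell_{0,i}\to V^\ell_0$ is used, together with the fact that $d_*(|I_i|_{0,\gamma^{-1}}, |J_i|)<\delta_i$ for $\ell$-conical varifolds $|J_i|$ canonical with respect to $V^\ell_{0,i}$, and the space of such canonical varifolds with mass $\leq\Lambda$ is compact. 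With $|I_\infty|$ simultaneously $\ell$-conical with respect to $V^\ell_0$ and $0$-conical with respect to a point $y_\infty\notin V^\ell_0$, the Cone-splitting principle for varifolds stated just above the lemma applies and yields that $|I_\infty|$ is $(\ell+1)$-conical with respect to ${\rm span}\{y_\infty,V^\ell_0\}$.

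Finally, I would derive the contradiction. Since $|I_\infty|$ is genuinely $(\ell+1)$-conical, it is in particular $(\tfrac{\epsilon}{2},1,\ell+1)$-conical, i.e. $d_*(|I_\infty|_{0,1},|J|)<\tfrac\epsilon2$ for a suitable $(\ell+1)$-conical $|J|$. Using again that $|I_i|_{0,1}\to|I_\infty|_{0,1}$ weak${}^*$ and the continuity of $d_*$, for $i$ large $d_*(|I_i|_{0,1},|J|)<\epsilon$, so $I_i$ is $(\epsilon,1,\ell+1)$-conical at $0^n$, contradicting the choice of the $I_i$.

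The main obstacle — and the only step requiring care beyond the harmonic-map case — is the handling of the defining plane $V^\ell$ and the passage of the conical structure to the limit in the weak${}^*$ varifold topology, because unlike the $L^2$ convergence of maps, varifold convergence can in principle lose mass or see cancellation; here one relies on stationarity plus the monotonicity formula to rule out mass loss on compact sets, and on compactness of the relevant families of canonical conical comparison varifolds to extract a limiting defining plane. Everything else is a verbatim transcription of the harmonic-map argument, which is why the paper states the lemma as an "immediate'' consequence of the Cone-splitting principle and a contradiction argument.
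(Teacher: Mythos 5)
Your proof is correct and follows exactly the route the paper intends: the paper gives no written proof of this lemma, simply asserting that it follows from the Cone-splitting principle for varifolds by the same compactness/contradiction template used for Theorems \ref{t:current_monotone_rigidity} and \ref{t:harm_monotone_rigidity}, which is precisely what you carry out. Your care about passing the defining planes $V^\ell_{0,i}\to V^\ell_0$ and the points $y_i\to y_\infty$ (to ensure $\dist(y_\infty,V^\ell_0)\ge\tau$ so that the limit is genuinely off the plane) and about using monotonicity to control mass in the varifold limit is exactly the right bookkeeping behind the paper's ``immediately obtain.''
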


As in Lemma \ref{l:harm_cone_splitting}, the import of the present
Cone-splitting Lemma is  that when {\it almost conical} points of $I$ are
close to one another,
 then they interact and force a surrounding neighborhood of $I$
to have a larger symmetry group.  As in Section
\ref{s:harm_cone_splitting}, we can use  induction to obtain the following
corollary.  This corollary is what is  needed for the proof of the relevant
Covering Lemma.

\begin{corollary}
\label{c:current_inductive}
 For all $\epsilon,\tau,\gamma >0$ there exists
$\delta=\delta(n,\Lambda, \epsilon,\tau,\gamma)>0$,
$\theta=\theta(n,\Lambda,\epsilon,\tau,\gamma)>0$,
 such that the following holds.
Let $I\in I^k(B_{2}(0^n),\partial B_2(0^n))$ denote
 a stationary current with mass $|I|(B_{2}(0^n))|<\Lambda$.
Let $r\leq \theta$ and $x\in L_{\gamma^{-n},r,\delta}(I)\cap B_1(0^n)$.
Then there exists $0\leq \ell\leq n$ such that:
\begin{enumerate}
\item $I$ is $(\epsilon,r,\ell)$-conical at $x$ with respect to $V^\ell_{x}$.
\item $L_{\gamma^{-n},\delta,r}\cap B_r(x)\subseteq T_{\tau r}(V^\ell_{x})\, .$
\end{enumerate}
\end{corollary}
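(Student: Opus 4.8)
The plan is to mimic the proof of Corollary \ref{c:harm_inductive} verbatim, iterating the Cone-splitting Lemma \ref{l:current_cone_splitting} at a geometrically decreasing sequence of scales and bookkeeping the degradation of the $\delta$-parameter through the composition. First I would fix $\epsilon,\tau,\gamma>0$ and let $\delta_0=\delta_0(n,\Lambda,\epsilon,\tau,\gamma^n)$ be the constant supplied by Lemma \ref{l:current_cone_splitting} (applied with $\gamma$ there replaced by $\gamma^n$, so that the ball $B_{\gamma^{-n}}$ appearing in the definition of $L_{\gamma^{-n},r,\delta}$ is the relevant one). Then define inductively $\delta^{[n-i]}=\delta_0\circ\delta_0\circ\cdots\circ\delta_0$ ($i$ factors in the composition), so that $\delta^{[0]}<\delta^{[1]}<\cdots<\delta_0$, and set $\delta=\delta^{[0]}$ and $\theta$ small enough (depending on the same parameters) that all the rescaled balls $B_{\gamma^{-(n-\ell-1)}r}(x)$ used below lie inside $B_2(0^n)$; the mass bound survives rescaling by monotonicity of the density $\theta_r$.

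The core of the argument: since $x\in L_{\gamma^{-n},r,\delta}(I)$, the current $I$ is $(\delta,\gamma^{-n}r,0)$-conical at $x$, hence in particular $(\epsilon,r,0)$-conical at $x$ with respect to the trivial plane $V^0=\{x\}$ (using $\delta\le\epsilon$, which we may arrange). Therefore there is a largest integer $\ell$ with $0\le\ell\le n$ for which $I$ is $(\epsilon,r,\ell)$-conical at $x$, say with respect to the plane $V^\ell_x$; this gives item 1. For item 2, suppose for contradiction that there were a point $y\in L_{\gamma^{-n},\delta,r}\cap B_r(x)$ with $y\notin T_{\tau r}(V^\ell_x)$. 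After rescaling $B_{\gamma^{-(n-\ell-1)}r}(x)$ to unit size we are in the setting of Lemma \ref{l:current_cone_splitting}: hypothesis 1 of that lemma holds because $I$ is $(\delta^{[\ell+1]},\cdot,\ell)$-conical at $x$ on the larger ball (the composition structure of the $\delta^{[\cdot]}$ is precisely what upgrades an $(\epsilon,r,\ell)$-conical point to a $(\delta^{[\ell+1]},\gamma^{-1}\cdot\gamma^{-(n-\ell-1)}r,\ell)$-conical point after shrinking), and hypothesis 2 holds because $y\in L_{\gamma^{-n},\delta,r}$ makes $I$ $(\delta,2,0)$-conical at $y$ in the rescaled ball, with $y$ outside the $\tau$-tube of $V^\ell_x$. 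The conclusion of Lemma \ref{l:current_cone_splitting} is then that $I$ is $(\epsilon,r,\ell+1)$-conical at $x$, contradicting maximality of $\ell$. Hence no such $y$ exists, which is item 2.

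The main obstacle is the scale bookkeeping rather than any conceptual difficulty: one must check that the number of iterations needed is at most $n$ (so that the finite composition $\delta^{[0]},\dots,\delta^{[n]}$ suffices and all constants depend only on $n,\Lambda,\epsilon,\tau,\gamma$), and that at each stage the hypotheses of Lemma \ref{l:current_cone_splitting} are met in the correctly rescaled ball — in particular that passing from $(\epsilon,r,\ell)$-conical on $B_r(x)$ to the near-conicality on the enlarged ball $B_{\gamma^{-(n-\ell-1)}r}(x)$ only costs a controlled worsening of the conicality parameter, absorbed by choosing $\delta$ as the innermost term of the composition. Since $0\le\ell\le n$ this terminates in at most $n$ steps, and the choice $\theta=\theta(n,\Lambda,\epsilon,\tau,\gamma)$ guarantees the rescaled balls never escape $B_2(0^n)$. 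This is exactly the pattern of the harmonic-map case, with $\theta_r(x)=r^{-k}|I|(B_r(x))$ and weak$^*$ varifold convergence replacing the normalized Dirichlet energy and $L^2$ convergence; the contradiction argument underlying Lemma \ref{l:current_cone_splitting} itself was already carried out from the cone-splitting principle for varifolds stated above, so nothing further is needed here.
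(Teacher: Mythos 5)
Your proposal follows essentially the same route as the paper: you fix the iterated composition $\delta^{[n-i]}=\delta_0\circ\cdots\circ\delta_0$, set $\delta=\delta^{[0]}$, select the largest $\ell$ for which $I$ is $(\epsilon,r,\ell)$-conical at $x$, and then derive item 2 by applying Lemma \ref{l:current_cone_splitting} on the rescaled ball $B_{\gamma^{-(n-\ell-1)}r}(x)$, obtaining a contradiction with maximality of $\ell$ — exactly the argument the paper gives for Corollary \ref{c:harm_inductive} and refers to here. The bookkeeping details you flag (at most $n$ iterations, $\theta$ small enough to keep rescaled balls inside $B_2(0^n)$) are the same ones the paper's terse proof leaves implicit.
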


\bibliography{rgcn}
\bibliographystyle{alpha}

\end{document}